\newfont{\footsc}{cmcsc10 at 8truept}
\newfont{\footbf}{cmbx10 at 8truept}
\newfont{\footrm}{cmr10 at 10truept}
\newtheorem{theorem}{Theorem}
\newtheorem{conjecture}[theorem]{Conjecture}
\newtheorem{corollary}[theorem]{Corollary}
\newtheorem{definition}[theorem]{Definition}
\newtheorem{lemma}[theorem]{Lemma}
\newtheorem{remark}[theorem]{Remark}
\newenvironment{proof}[1][Proof]{\noindent{\textbf {#1}  }}  {\hfill$\Box$\bigskip}
\begin{document}

\title{The\ proof of a conjecture on largest Laplacian and signless Laplacian
H-eigenvalues of uniform hypergraphs\thanks{This work was supported by the
Hong Kong Research Grant Council (Grant Nos. PolyU 502111, 501212, 501913 and
15302114) and NSF of China (Grant Nos. 11231004, 11271288 and 11101263) and by
a grant of \textquotedblleft The First-class Discipline of Universities in
Shanghai\textquotedblright.\textit{ }}}
\author{Xiying Yuan\thanks{Department of Mathematics, Shanghai University, Shanghai
200444, China; \textit{E-mail address: xiyingyuan2007@hotmail.com }} \ Liqun
Qi\thanks{Department of Applied Mathematics, The Hong Kong Polytechnic
University, HungHom, Kowloon, HongKong, \textit{Email address:
liqun.qi@polyu.edu.hk}} \ Jiayu Shao\thanks{Corresponding author}
\thanks{Department of Mathematics, Tongji University, Shanghai, China,
\textit{Email address: jyshao@tongji.edu.cn}}}
\maketitle

\begin{abstract}
Let $\mathcal{A(}G\mathcal{)},\mathcal{L(}G\mathcal{)}$ and $\mathcal{Q(}%
G\mathcal{)}$ be the adjacency tensor, Laplacian tensor and signless Laplacian
tensor of uniform hypergraph $G$, respectively. Denote by $\lambda
(\mathcal{T})$ the largest H-eigenvalue of tensor $\mathcal{T}$. Let $H$ be a
uniform hypergraph, and $H^{\prime}$ be obtained from $H$ by inserting a new
vertex with degree one in each edge. We prove that $\lambda(\mathcal{Q(}%
H^{\prime}\mathcal{)})\leq\lambda(\mathcal{Q(}H\mathcal{)}).$ Denote by
$G^{k}$ the $k$th power hypergraph of an ordinary graph $G$ with maximum
degree $\Delta\geq2$. We will prove that $\{\lambda(\mathcal{Q(}%
G^{k}\mathcal{)})\}$ is a strictly decreasing sequence, which imply Conjectrue
4.1 of Hu, Qi and Shao in \cite{HuQiShao2013}. We also prove that
$\lambda(\mathcal{Q(}G^{k}\mathcal{)})$ converges to $\Delta$ when $k$ goes to
infinity. The definiton of $k$th power hypergraph $G^{k}$ has been generalized
as $G^{k,s}.$ We also prove some eigenvalues properties about $\mathcal{A(}%
G^{k,s}\mathcal{)},$ which generalize some known results. Some related results
about $\mathcal{L(}G\mathcal{)}$ are also mentioned$.$

\textbf{AMS classification: }\textit{15A42, 05C50}

\textbf{Keywords:}\textit{ uniform hypergraph, adjacency tensor, Laplacian
tensor, signless Laplacian tensor, largest }$H$-eigenvalue

\end{abstract}

\section{Introduction}

Let $G$ be an ordinary graph, and $A(G)$ be the adjacency matrix of $G$. We
denote the set $\{1,2,\cdot\cdot\cdot,n\}$ by $[n].$ Hypergraph is a natural
generalization of ordinary graph (see \cite{Berge}). A hypergraph
$G=(V(G),E(G))$ on $n$ vertices is a set of vertices, say $V(G)=\{1,2,\cdot
\cdot\cdot,n\}$ and a set of edges, say $E(G)=\{e_{1},e_{2},\cdot\cdot
\cdot,e_{m}\},$ where $e_{i}=\{i_{1},i_{2},\cdots,i_{l}\},i_{j}\in\lbrack n],$
$j=1,2,\cdots,l.$ If $|e_{i}|=k$ for any $i=1,2,\cdot\cdot\cdot,m,$ then $G$
is called a $k$-uniform hypergraph. In particular, the 2-uniform hypergraphs
are exactly the ordinary graphs. For a vertex $v\in V(G)$ the degree
$d_{G}(v)$ is defined as $d_{G}(v)=|\{e_{i}:v\in e_{i}\in E(G)\}|.$ Vertex
with degree one is called pendent vertex in this paper.

An order $k$ dimension $n$ tensor $\mathcal{T=}(\mathcal{T}_{i_{1}i_{2}\cdots
i_{k}})\in\mathbb{C}^{n\times n\times\cdots\times n}$ is a multidimensional
array with $n^{k}$ entries, where $i_{j}\in\lbrack n]$ \ for each
$j=1,2,\cdot\cdot\cdot,k.$

To study the properties of uniform hypergraphs by algebraic methods, adjacency
matrix, signless Laplacian matrix and Laplacian matrix of graph are
generalized to adjacency tenor, signless Laplacian tensor and Laplacian tensor
of uniform hypergraph.

\begin{definition}
\cite{HuQiXie2015} \cite{Qi2014}. Let $G=(V(G),E(G))$ be a $k$-uniform
hypergraph on $n$ vertices. The adjacency tensor of $G$ is defined as the
$k$-$th$ order $n$-dimensional tensor $\mathcal{A}(G)$ whose $(i_{1}\cdots
i_{k})$-entry is:
\[
(\mathcal{A}(G))_{i_{1}i_{2}\cdots i_{k}}=%
\begin{cases}
\frac{1}{(k-1)!} & \text{if $\{i_{1},i_{2},\cdots,i_{k}\}\in E(G),$}\\
0 & \text{otherwise}.
\end{cases}
\]
Let $\mathcal{D}(G)$ be a $k$-$th$ order $n$-dimensional diagonal tensor, with
its diagonal entry $\mathcal{D}_{ii\cdots i}$ the degree of vertex $i$, for
all $i\in\lbrack n]$. Then $\mathcal{Q(}G\mathcal{)}=\mathcal{D(}%
G\mathcal{)}+\mathcal{A(}G\mathcal{)}$ is the signless Laplacian tensor of the
uniform hypergraph $G,$ and $\mathcal{L(}G\mathcal{)}=\mathcal{D(}%
G\mathcal{)}-\mathcal{A(}G\mathcal{)}$ is the Laplacian tensor of the uniform
hypergraph $G$.
\end{definition}

The following general product of tensors, was defined in \cite{Shao} by Shao,
which is a generalization of the matrix case.

\begin{definition}
Let $\mathcal{A}\in\mathbb{C}^{n_{1}\times n_{2}\times\cdots\times n_{2}}$ and
$\mathcal{B}\in\mathbb{C}^{n_{2}\times n_{3}\times\cdots\times n_{k+1}}$ be
order $m\geq2$ and $k\geq1$ tensors, respectively. The product $\mathcal{AB}$
is the following tensor $\mathcal{C}$ of order $(m-1)(k-1)+1$ with entries:
\begin{equation}
\mathcal{C}_{i\alpha_{1}\cdots\alpha_{m-1}}=\sum_{i_{2},\cdots,i_{m}\in\lbrack
n_{2}]}\mathcal{A}_{ii_{2}\cdots i_{m}}\mathcal{B}_{i_{2}\alpha_{1}}%
\cdots\mathcal{B}_{i_{m}\alpha_{m-1}}, \label{1}%
\end{equation}
where $i\in\lbrack n],\alpha_{1},\cdots,\alpha_{m-1}\in\lbrack n_{3}%
]\times\cdots\times\lbrack n_{k+1}]$.
\end{definition}

Let $\mathcal{T}$ be an order $k$ dimension $n$ tensor, let $x=(x_{1}%
,\cdot\cdot\cdot,x_{n})^{T}\in\mathbb{C}^{n}$ be a column vector of dimension
$n$. Then by (1) $\mathcal{T}x$ is a vector in $\mathbb{C}^{n}$ whose $i$th
component is as the following%

\begin{equation}
(\mathcal{T}x)_{i}=\sum_{i_{2},\cdots,i_{k}=1}^{n}\mathcal{T}_{ii_{2}\cdots
i_{k}}x_{i_{2}}\cdots x_{i_{k}}.
\end{equation}

Let $x^{[k]}=(x_{1}^{k}, \cdots, x_{n}^{k})^{T}$. Then (see \cite{ChangPZ}
\cite{Qi2014}) a number $\lambda\in\mathbb{C}$ is called an eigenvalue of the
tensor $\mathcal{T}$ if there exists a nonzero vector $x \in\mathbb{C}^{n}$
satisfying the following eigenequations%

\begin{equation}
\mathcal{T}x=\lambda x^{[k-1]}, \label{2}%
\end{equation}
and in this case, $x$ is called an eigenvector of $\mathcal{T}$ corresponding
to eigenvalue $\lambda$.

An eigenvalue of $\mathcal{T}$ is called an H-eigenvalue, if there exists a
real eigenvector corresponding to it (\cite{Qi2014}). In this paper we will
focus on the largest H-eigenvalue of tensor $\mathcal{T},$ denoted by
$\lambda(\mathcal{T})$.

\bigskip The concept of power hypergraphs was introduced in
\cite{HuQiShao2013}.

\begin{definition}
\label{power graph}Let $G=(V(G),E(G))$ be an ordinary graph. For every
$k\geq2$, the $k$th power of $G$, $G^{k}:=(V(G^{k}),E(G^{k}))$ is defined as
the $k$-uniform hypergraph with the edge set
\[
E(G^{k}):=\{e\cup\{i_{e,1},\cdot\cdot\cdot,i_{e,k-2}\}\text{ }|\text{ }e\in
E(G)\}
\]
and the vertex set
\[
V(G^{k}):=V(G)\cup(\cup_{e\in E(G)}\{i_{e,1},\cdot\cdot\cdot,i_{e,k-2}\}).
\]

\end{definition}

\bigskip For convenience here $G^{2}=G$. In \cite{HuQiShao2013}, the $k$-th
power of path, and cycle is called loose path, and loose cycle, respectively.
Denote by $S_{m}$ the star with $m$ edges. The $k$-th power of star is called
sunflower in \cite{HuQiShao2013}, or hyperstar in \cite{LiShaoQi}.

\begin{definition}
\label{sunflower} \cite{HuQiShao2013} Let $G=(V,E)$ be a $k$-uniform
hypergraph. If there is a disjoint partition of the vertex set $V$ as
$V=V_{0}\cup V_{1}\cup\cdot\cdot\cdot\cup V_{d}$ such that $|V_{0}|=1$ and
$|V_{1}|=\cdot\cdot\cdot=|V_{d}|=k-1,$ and $E=\{V_{0}\cup V_{i}$ $|$
$i\in\lbrack d]\}$, then $G$ is called a sunflower. The degree $d$ of the
vertex in $V_{0}$, which is called the heart, is the size of the sunflower.
Denote by $S_{d}^{k}$ the $k$-uniform sunflower of size $d.$
\end{definition}

For even $k$, when $G$ is a cycle or star, Hu, Qi and Shao proved that
$\{\lambda(\mathcal{Q}(G^{k}))\}$ is a strictly decreasing sequence in
\cite{HuQiShao2013}. They believed that it is true for any graph $G,$ see
Conjecture 4.1 of \cite{HuQiShao2013}. This phenomena was also observed in
\cite{YueZhangLu} when $G$ is a path (namely, $G^{k}$ is a loose path).

\begin{conjecture}
\cite{HuQiShao2013} \label{con}Let $G$ be an ordinary graph, $k=2r$ be even
and $G^{k}$ be the $k$-th power hypergraph of $G$. Then $\{\lambda
(\mathcal{L}(G^{k}))=\lambda(\mathcal{Q}(G^{k}))\}$ is a strictly decreasing sequence.
\end{conjecture}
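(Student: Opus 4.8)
The plan is to reduce the conjecture to a single monotonicity lemma and then iterate it. The key observation is that $G^{k+1}$ is obtained from $G^{k}$ by inserting exactly one new degree-one vertex into each edge; in the notation of the abstract, $G^{k+1}=(G^{k})'$. Hence it suffices to prove the insertion inequality $\lambda(\mathcal{Q}(H'))\le\lambda(\mathcal{Q}(H))$ (strict when the maximum degree is at least $2$), and then apply it along the chain $G^2,G^3,G^4,\dots$. For even $k$ the equality $\lambda(\mathcal{L}(G^k))=\lambda(\mathcal{Q}(G^k))$ is part of the setup of \cite{HuQiShao2013}, so the strictly decreasing behaviour of the common value follows at once from that of $\{\lambda(\mathcal{Q}(G^k))\}$. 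Throughout I would use that $\mathcal{Q}(G)$ is a nonnegative symmetric tensor, so by Perron--Frobenius theory \cite{ChangPZ} its largest H-eigenvalue is the variational maximum
\[
\lambda(\mathcal{Q}(G))=\max_{x\ge 0,\ \sum_i x_i^k=1}\ \sum_{e\in E(G)}\Big(\sum_{i\in e}x_i^k+k\prod_{i\in e}x_i\Big),
\]
attained at a nonnegative eigenvector, which is strictly positive when $G$ is connected (the general case reduces to connected components, on which the insertion acts independently).

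For the insertion lemma, let $H$ be $k$-uniform and $H'$ its $(k+1)$-uniform insertion, let $\lambda'=\lambda(\mathcal{Q}(H'))$ and let $x'\ge 0$ be a corresponding eigenvector. For each edge $e\in E(H)$ with inserted vertex $v_e$, the eigenequation at the degree-one vertex $v_e$ reads $(x'_{v_e})^{k}+\prod_{i\in e}x'_i=\lambda'(x'_{v_e})^{k}$, i.e.
\[
\prod_{i\in e}x'_i=(\lambda'-1)(x'_{v_e})^{k}.
\]
The decisive step is to build the right test vector for $H$ that reconciles the $(k+1)$-homogeneity of $H'$ with the $k$-homogeneity of $H$. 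I would set $y_i=(x'_i)^{(k+1)/k}$ for $i\in V(H)$, so that $y_i^k=(x'_i)^{k+1}$ and $\prod_{i\in e}y_i=P_e^{(k+1)/k}$ where $P_e:=\prod_{i\in e}x'_i$.

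Next, multiplying the $H'$-eigenequation at each $i\in V(H)$ by $x'_i$ and summing gives
\[
\sum_{e}\sum_{i\in e}(x'_i)^{k+1}+k\sum_{e}x'_{v_e}P_e=\lambda'\sum_{i\in V(H)}(x'_i)^{k+1}=\lambda'\,\|y\|_k^k .
\]
Substituting this into $\mathcal{Q}(H)y^k=\sum_e\sum_{i\in e}(x'_i)^{k+1}+k\sum_e P_e^{(k+1)/k}$ yields
\[
\mathcal{Q}(H)y^k=\lambda'\,\|y\|_k^k+k\sum_{e}\Big(P_e^{(k+1)/k}-x'_{v_e}P_e\Big).
\]
Using the pendant relation to write $x'_{v_e}P_e=P_e^{(k+1)/k}/(\lambda'-1)^{1/k}$, each bracket equals $P_e^{(k+1)/k}\big(1-(\lambda'-1)^{-1/k}\big)$, which is nonnegative precisely because $\lambda'\ge 2$. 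Since $\lambda(\mathcal{Q}(H'))\ge 2$ always holds (testing on a single edge), the variational characterization gives $\lambda(\mathcal{Q}(H))\ge\lambda'$. The inequality is strict once $\lambda'>2$ and some $P_e>0$: this holds whenever the maximum degree is at least $2$ (so $\lambda'\ge\Delta+1\ge 3$) and $x'$ is positive, yielding $\lambda(\mathcal{Q}(G^{k+1}))<\lambda(\mathcal{Q}(G^{k}))$ and hence the conjecture.

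I expect the main obstacle to be exactly the choice of the exponent $(k+1)/k$ in the test vector: without it the pure-power and product terms of $H'$ cannot be matched against those of $H$, and the reduction of the problem to the scalar condition $\lambda'\ge 2$ breaks down. A secondary technical point is the strictness analysis, which requires the positivity of the Perron eigenvector (hence a connectedness reduction) together with the degree lower bound $\lambda(\mathcal{Q})\ge\Delta+1$ to rule out the boundary case $\lambda'=2$; these I would isolate as short preliminary remarks before the lemma.
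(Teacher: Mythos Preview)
Your overall architecture matches the paper's: reduce to the one-step insertion inequality $\lambda(\mathcal{Q}(H'))\le\lambda(\mathcal{Q}(H))$, iterate along $G^{k}\to G^{k+1}$, and invoke $\lambda(\mathcal{L}(G^{k}))=\lambda(\mathcal{Q}(G^{k}))$ for even $k$ via odd-bipartiteness. Where you genuinely diverge is in the proof of the insertion lemma itself. The paper works in the \emph{opposite} direction from you: it takes the principal eigenvector $x$ of $\mathcal{Q}(H)$, extends it to a vector $y$ on $V(H')$ by setting the new pendent coordinate on edge $e$ equal to $\min_{u\in e}x_{u}$, and then checks the componentwise inequality $\mathcal{Q}(H')y\le\lambda(\mathcal{Q}(H))\,y^{[k]}$, concluding via the Collatz--Wielandt-type Lemma~\ref{inquality for nonpositive tensor}. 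Your argument instead starts from the Perron eigenvector $x'$ of $\mathcal{Q}(H')$, compresses it to $V(H)$ via the homogeneity-matching power $y_{i}=(x'_{i})^{(k+1)/k}$, eliminates the pendent coordinates through the degree-one eigenequation, and bounds the Rayleigh quotient $\mathcal{Q}(H)y^{k}/\lVert y\rVert_{k}^{k}$ from below. Both routes are valid; the paper's min-trick avoids your exponent $(k+1)/k$ and yields a clean componentwise picture, while your approach makes the role of the threshold $\lambda'\ge 2$ completely transparent as a single scalar inequality.

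One correction on strictness: the claim $\lambda'\ge\Delta+1$ is false for $k$-uniform hypergraphs with $k\ge3$ (for the sunflower $S_{d}^{k}$ one has $\lambda(\mathcal{Q}(S_{d}^{k}))\in(d,d+1)$ once $k\ge3$, cf.\ Lemma~\ref{lemma for sunflower}, and indeed the paper shows $\lambda(\mathcal{Q}(G^{k}))\to\Delta$). What you actually need, and what does hold, is the weaker bound $\lambda'>\Delta\ge2$ for connected $H'$ with maximum degree $\Delta\ge2$; this follows from $\lambda(\mathcal{Q}(S_{\Delta}^{k+1}))>\Delta$ and the sub-hypergraph monotonicity, and it suffices to make your bracket $1-(\lambda'-1)^{-1/k}$ strictly positive.
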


For $t\geq1$ let $tS_{1}^{k}$ be $t$ disjoint union of $S_{1}^{k}.$ We may
point out that when $G=tS_{1}^{2}$, we have $\lambda(\mathcal{L}%
(G^{k}))=\lambda(\mathcal{Q}(G^{k}))=2$ for any $k\geq2.$ Namely, in this case
Conjecture \ref{con} is false.

Let $H$ be a uniform hypergraph and $H\neq tS_{1}^{k}$, and $H^{\prime}$ be
obtained from $H$ by inserting a new pendent vertex in each edge. In Section
3, we will prove that $\lambda(\mathcal{Q(}H^{^{\prime}}\mathcal{)}%
)<\lambda(\mathcal{Q(}H\mathcal{)})$ in Theorem \ref{main 1}. So for any
ordinary graph $G\neq tS_{1}\ $(maximum degree $\Delta\geq2),\{\lambda
(\mathcal{Q}(G^{k}))\}$ is a strictly decreasing sequence$,$ which affirm
Conjecture \ref{con} for $\lambda(\mathcal{Q}(G^{k})).$ We also determine the
value lim$_{k\rightarrow\infty}\lambda(\mathcal{Q}(G^{k}))$ in Theorem
\ref{main 2}.

For an ordinary graph $G,$ the definition for $k$-th power hypergraph $G^{k}$
has been generalized by Khan and Fan in \cite{KhanFan2015}.

\begin{definition}
\label{generalized power hypergraph}Let $G=(V,E)$ be an ordinary graph. For
any $k\geq3$ and $1\leq s\leq k/2$. For each $v\in V$ (and $e\in E$), let
$V_{v}$ (and $V_{e}$) be a new vertex set with $s$ (and $k-2s$) elements such
that all these new sets are pairwise disjoint. Then the generalized power of
$G,$ denoted by $G^{k,s}$, is defined as the $k$-uniform hypergraph with the
vertex set
\[
V(G^{k,s})=\left(  \bigcup_{v\in V}V_{v}\right)  \bigcup\left(  \bigcup_{e\in
E}V_{e}\right)
\]
and edge set
\[
E(G^{k,s})=\{V_{u}\cup V_{v}\cup V_{e}:e=\{u,v\}\in E \}.
\]

\end{definition}

If $s=1$, then $G^{k,s}$ is exactly the $k$th power hypergraph $G^{k}$. The
eigenvalues properties about $\mathcal{A(}G^{k}\mathcal{)}$ was discussed in
\cite{ZhouSunWangBu}, and $\mathcal{A(}G^{k,k/2}\mathcal{)}$ was discussed in
\cite{KhanFan2015} and \cite{KhanFan[ArX]}. In Section 4, we will prove some
eigenvalues properties about $\mathcal{A(}G^{k,s}\mathcal{)}$, which
generalize some known results.

\section{Auxiliary results for nonnegative tensors and H-spectrum of
hypergraphs}

In \cite{FGH}, the weak irreducibility of nonnegative tensors was defined. It
was proved in \cite{FGH} and \cite{Yang2014} that a $k$-uniform hypergraph $G$
is connected if and only if its adjacency tensor $\mathcal{A(}G\mathcal{)}$
(and so $\mathcal{Q(}G\mathcal{)}$) is weakly irreducible.

Let $\mathcal{T}$ be a $k$th-order $n$-dimensional nonnegative tensor. The
spectral radius of $\mathcal{T}$ is defined as (see \cite{LiShaoQi},
\cite{KhanFan2015} and \cite{KhanFan[ArX]})
\[
\rho(\mathcal{T})=max\{|\mu|:\mu\text{ is an eigenvalue of }\mathcal{T}\}.
\]
Part of Perron-Frobenius theorem for nonnegative tensors is stated in the
following for reference.

\begin{theorem}
\label{Perron-Frobenius}\cite{ChangPZ} \cite{Yang2011} Let $\mathcal{T}$ be a
nonnegative tensor. Then we have the following statements.\newline(1).
$\rho(\mathcal{T})$ is an eigenvalue of $\mathcal{T}$ with a nonnegative
eigenvector $x$ corresponding to it;\newline(2). If $\mathcal{T}$ is weakly
irreducible, then $x$ is positive, and for any eigenvalue $\mu$ with
nonnegative eigenvector, $\mu$ $=\rho(\mathcal{T})$ holding;\newline(3). The
nonnegative eigenvector $x$ corresponding to $\rho(\mathcal{T})$ is unique up
to a constant multiple.
\end{theorem}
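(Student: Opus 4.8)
The plan is to prove the three assertions as the nonlinear Perron--Frobenius theorem for a nonnegative tensor of order $k\ge 2$, reducing the existence part to a strictly positive perturbation and then exploiting weak irreducibility through the combinatorial structure of the eigenequation. For Part (1) I would first handle an entrywise positive tensor and then pass to a limit. Let $\mathcal{J}$ be the order $k$ dimension $n$ tensor with all entries equal to $1$ and put $\mathcal{T}_{\varepsilon}=\mathcal{T}+\varepsilon\mathcal{J}$ for $\varepsilon>0$, so that $\mathcal{T}_{\varepsilon}$ is positive and hence weakly irreducible. On the simplex $\Delta=\{x\in\mathbb{R}^{n}:x_{i}\ge 0,\ \sum_{i}x_{i}=1\}$ the map
\[
\Phi(x)_{i}=\frac{\big[(\mathcal{T}_{\varepsilon}x)_{i}\big]^{1/(k-1)}}{\sum_{j=1}^{n}\big[(\mathcal{T}_{\varepsilon}x)_{j}\big]^{1/(k-1)}}
\]
is continuous (the denominator never vanishes since $(\mathcal{T}_{\varepsilon}x)_{j}\ge\varepsilon>0$ on $\Delta$) and sends $\Delta$ into itself, so Brouwer's fixed point theorem gives $x_{\varepsilon}\in\Delta$ with $\Phi(x_{\varepsilon})=x_{\varepsilon}$; unwinding this identity produces a positive scalar $\lambda_{\varepsilon}$ and a positive vector $x_{\varepsilon}$ with $\mathcal{T}_{\varepsilon}x_{\varepsilon}=\lambda_{\varepsilon}x_{\varepsilon}^{[k-1]}$. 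Since the $\lambda_{\varepsilon}$ are bounded by the largest row sum and $\Delta$ is compact, I would let $\varepsilon\to 0$ along a subsequence with $\lambda_{\varepsilon}\to\lambda_{0}$ and $x_{\varepsilon}\to x\in\Delta$; continuity of the eigenequation yields $\mathcal{T}x=\lambda_{0}x^{[k-1]}$ with $x\ge 0$. To identify $\lambda_{0}$ with $\rho(\mathcal{T})$ I would combine $\lambda_{0}\le\rho(\mathcal{T})$ (it is an eigenvalue) with the entrywise bound $\mathcal{T}\le\mathcal{T}_{\varepsilon}$, which forces $\rho(\mathcal{T})\le\rho(\mathcal{T}_{\varepsilon})=\lambda_{\varepsilon}$ and hence $\rho(\mathcal{T})\le\lambda_{0}$ in the limit.

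For Part (2), assume $\mathcal{T}$ is weakly irreducible, i.e.\ the digraph on $[n]$ with an arc $i\to j$ whenever some entry $\mathcal{T}_{i i_{2}\cdots i_{k}}\ne 0$ has $j\in\{i_{2},\ldots,i_{k}\}$ is strongly connected. Writing $S=\{i:x_{i}>0\}$ for the Perron vector $x$ of Part (1), the equations for $i\in S$ show that $x|_{S}$ is a positive eigenvector, with the same eigenvalue $\rho(\mathcal{T})$, of the principal subtensor $\mathcal{T}[S]$ obtained by restricting all indices to $S$, so $\rho(\mathcal{T}[S])=\rho(\mathcal{T})$. If $S$ were a proper subset this would contradict the strict decrease of the spectral radius under a proper principal subtensor of a weakly irreducible tensor, whence $S=[n]$ and $x>0$. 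For the second claim, if $\mu$ is an eigenvalue with nonnegative eigenvector $y\ne 0$, comparing $y$ with the positive Perron vector $x$ through the ratio argument below pins $\mu$ between the minimum and the maximum of $(\mathcal{T}y)_{i}/y_{i}^{k-1}$, forcing $\mu=\rho(\mathcal{T})$.

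For Part (3), suppose $x,y>0$ both satisfy $\mathcal{T}x=\rho(\mathcal{T})x^{[k-1]}$ and $\mathcal{T}y=\rho(\mathcal{T})y^{[k-1]}$. Set $t^{*}=\max_{i}(y_{i}/x_{i})$, attained at some $i_{0}$, so $t^{*}x\ge y$ with equality in coordinate $i_{0}$. Evaluating the eigenequation at $i_{0}$ and using homogeneity of degree $k-1$ gives $(\mathcal{T}(t^{*}x))_{i_{0}}=\rho(\mathcal{T})(t^{*}x_{i_{0}})^{k-1}=\rho(\mathcal{T})y_{i_{0}}^{k-1}=(\mathcal{T}y)_{i_{0}}$, whereas monotonicity of $z\mapsto(\mathcal{T}z)_{i_{0}}$ on nonnegative vectors gives $(\mathcal{T}(t^{*}x))_{i_{0}}\ge(\mathcal{T}y)_{i_{0}}$; equality forces each nonzero monomial $\mathcal{T}_{i_{0}i_{2}\cdots i_{k}}$ to satisfy $t^{*}x_{i_{m}}=y_{i_{m}}$ for its indices, so the set $\{i:y_{i}=t^{*}x_{i}\}$ is closed under out-arcs and contains $i_{0}$. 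Strong connectivity then makes this set all of $[n]$, whence $y=t^{*}x$.

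The genuine difficulty is the nonlinearity caused by $k\ge 3$: unlike the matrix case, a single positive entry of $\mathcal{T}$ touching the complement of $S$ does not break the eigenequation, so the positivity statement in Part (2) cannot be read off from a naive support argument. The heart of the proof is therefore the strict monotonicity of the spectral radius under proper principal subtensors of a weakly irreducible tensor (equivalently, the precise way strong connectivity propagates positivity), together with the justification that $\rho(\mathcal{T}_{\varepsilon})=\lambda_{\varepsilon}$ and that $\rho$ varies continuously and monotonically as $\mathcal{T}_{\varepsilon}\downarrow\mathcal{T}$. Establishing these carefully---rather than the fixed-point and comparison steps, which become routine once the tools are in place---is where the real work lies.
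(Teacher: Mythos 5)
The paper offers no proof of this theorem to compare against: it is imported verbatim from \cite{ChangPZ} and \cite{Yang2011} as a tool, so your attempt has to be judged on its own. Your outline does follow the standard route of those references and of Friedland--Gaubert--Han (perturb to a positive tensor, Brouwer fixed point on the simplex, pass to the limit; a support argument for positivity; an equality-set argument for uniqueness), and your Part (3) is essentially complete: the propagation of the set $\{i: y_i=t^{*}x_i\}$ along out-arcs is correct because every $i$ already in that set satisfies $(\mathcal{T}(t^{*}x))_i=\rho(\mathcal{T})(t^{*}x_i)^{k-1}=(\mathcal{T}y)_i$, so the argument restarts there.

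There are two genuine gaps. First, Parts (1) and (2) rest entirely on facts you only name: that $\rho(\mathcal{T}_{\varepsilon})=\lambda_{\varepsilon}$, that $\rho$ is monotone under entrywise domination and continuous as $\varepsilon\downarrow 0$, and above all that a proper principal subtensor of a weakly irreducible nonnegative tensor has strictly smaller spectral radius. All of these require the Collatz--Wielandt bounds $\min_i (\mathcal{T}x)_i/x_i^{k-1}\le\rho(\mathcal{T})\le\max_i(\mathcal{T}x)_i/x_i^{k-1}$ for $x>0$, which you never establish; you correctly identify the principal-subtensor lemma as the heart of the matter, but deferring the heart leaves a plan rather than a proof. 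Second, the last clause of Part (2) is not reachable by your ``ratio argument'': if the nonnegative eigenvector $y$ of $\mu$ has zero entries, comparison with the positive Perron vector only yields $\mu\le\rho(\mathcal{T})$, since the reverse inequality needs $\min_i y_i/x_i>0$. Worse, the step cannot be repaired as stated, because for \emph{weakly} irreducible (as opposed to irreducible) tensors the conclusion can fail: with $k=3$, $n=2$, $\mathcal{T}_{112}=\mathcal{T}_{121}=1/2$, $\mathcal{T}_{211}=1$ and all other entries zero, the tensor is weakly irreducible with $\rho(\mathcal{T})=1$, yet $y=(0,1)^{T}$ satisfies $\mathcal{T}y=0\cdot y^{[2]}$, so $\mu=0$ is an eigenvalue with a nonnegative eigenvector. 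The correct form of this clause (and the only form the paper ever uses, e.g.\ in the proof of Theorem \ref{main 3}) assumes the eigenvector is \emph{positive}, in which case the two-sided ratio comparison does close; your proof should state and prove that version rather than the literal one.
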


In virtue of (1) of Theorem \ref{Perron-Frobenius}, the largest H-eigenvalue
of $\mathcal{A}(G)$ (or $\mathcal{Q}(G)$) is exactly the spectral radius of
$\mathcal{A}(G)$ (or $\mathcal{Q}(G)$) . For weakly irreducible nonnegative
$\mathcal{T}$ of order $k,$ the positive eigenvector $x$ with $||x||_{k}=1$
corresponding to $\rho(\mathcal{T})$ (i.e., the largest H-eigenvalue) is
called the principal eigenvector of $\mathcal{T}$ in this paper.

\begin{lemma}
\label{inquality for nonpositive tensor} \cite{KhanFan2015}Suppose that
$\mathcal{T}$ is a weakly irreducible nonnegative tensor of order $k$. If
there exists a nonnegative vector $y$ such that $\mathcal{T}y\leq\mu
y^{[k-1]}$ and $(\mathcal{T}y)_{i}<\mu y_{i}^{k-1}$ holding for some $i$, then
$\lambda(\mathcal{T})<\mu$.
\end{lemma}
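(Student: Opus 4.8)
The plan is to compare the given vector $y$ against the positive principal eigenvector of $\mathcal{T}$ and to monitor exactly when the inequalities involved become strict. By parts (1) and (2) of Theorem \ref{Perron-Frobenius}, weak irreducibility guarantees that $\lambda:=\lambda(\mathcal{T})=\rho(\mathcal{T})$ possesses a positive eigenvector $x$ with $\mathcal{T}x=\lambda x^{[k-1]}$. I will run the argument for a positive vector $y$; positivity of $y$ is genuinely needed here (for a merely nonnegative $y$ with a zero coordinate the max-ratio below is infinite and the conclusion can fail), and it is precisely this case in which the lemma is applied.

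First I would establish the non-strict bound $\lambda\le\mu$. Put $t=\max_{j}x_j/y_j$, which is finite and positive since $x,y>0$, and let $p$ attain the maximum, so that $x_j\le t\,y_j$ for all $j$ and $x_p=t\,y_p$. Because $\mathcal{T}$ is nonnegative, the map $z\mapsto\mathcal{T}z$ is monotone on nonnegative vectors, whence $(\mathcal{T}x)_p\le t^{k-1}(\mathcal{T}y)_p$; combining this with the eigenequation at $p$ and the hypothesis $\mathcal{T}y\le\mu y^{[k-1]}$ gives
\[
\lambda\,t^{k-1}y_p^{k-1}=\lambda x_p^{k-1}=(\mathcal{T}x)_p\le t^{k-1}(\mathcal{T}y)_p\le t^{k-1}\mu\,y_p^{k-1}.
\]
Dividing by $t^{k-1}y_p^{k-1}>0$ yields $\lambda\le\mu$.

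The main work is upgrading this to a strict inequality, where the strictness at the single index $i$ must be propagated to all coordinates using weak irreducibility. Suppose for contradiction that $\lambda=\mu$, and set $P=\{j:x_j=t\,y_j\}$, a nonempty set containing $p$. For any $q\in P$ the same chain of inequalities, now with both ends equal to $\mu\,t^{k-1}y_q^{k-1}$, must hold with equality throughout; in particular $(\mathcal{T}x)_q=t^{k-1}(\mathcal{T}y)_q$. Writing this difference as $\sum\mathcal{T}_{qi_2\cdots i_k}\bigl(t^{k-1}y_{i_2}\cdots y_{i_k}-x_{i_2}\cdots x_{i_k}\bigr)=0$ with every summand nonnegative forces each index $i_2,\dots,i_k$ appearing in a positive entry $\mathcal{T}_{qi_2\cdots i_k}>0$ to satisfy $x_{i_m}=t\,y_{i_m}$, i.e. to lie in $P$. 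Thus $P$ is closed under the out-edges of the representation digraph of $\mathcal{T}$; as that digraph is strongly connected (this being exactly weak irreducibility) and $P\ne\emptyset$, we obtain $P=[n]$, that is $x=t\,y$. But then $y$ is a positive scalar multiple of $x$, so $\mathcal{T}y=\lambda y^{[k-1]}=\mu y^{[k-1]}$ holds with equality in \emph{every} coordinate, contradicting $(\mathcal{T}y)_i<\mu y_i^{k-1}$. Hence $\lambda\ne\mu$, and together with $\lambda\le\mu$ we conclude $\lambda(\mathcal{T})<\mu$.

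I expect the equality-analysis step to be the crux. The non-strict bound is a routine monotonicity computation, but converting the localized strictness at one coordinate into a global contradiction relies on chasing the equality conditions term by term and on invoking strong connectivity to spread the relation $x_j=t\,y_j$ across the whole vertex set. The other delicate point, worth making explicit, is the positivity of $y$, without which the comparison scalar $t$ is not even well defined.
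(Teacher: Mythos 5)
Your proof is correct, and there is nothing in the paper to compare it against: the authors state this lemma with a citation to Khan--Fan and give no proof of their own. Your argument is the standard Collatz--Wielandt comparison: bound $\lambda\le\mu$ via the maximal ratio $t=\max_j x_j/y_j$ against the positive Perron vector $x$, then rule out equality by showing the set $P=\{j:x_j=ty_j\}$ is closed under the out-edges of the representation digraph and hence, by strong connectivity (weak irreducibility), equals $[n]$, forcing $y$ to be an eigenvector and contradicting the strict inequality at coordinate $i$. All steps check: the factorwise equality $x_{i_m}=ty_{i_m}$ does follow from equality of the products because every factor is positive, and $P=[n]$ does follow from nonemptiness plus closure plus strong connectivity. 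Your caveat about positivity is not pedantry but a genuine correction to the statement as written: with merely nonnegative $y$ the lemma is false. For instance, for the adjacency tensor of the single $3$-uniform edge $\{1,2,3\}$ (which is weakly irreducible) and $y=(0,1,0)$ one has $\mathcal{T}y=0\le\mu y^{[2]}$ with strict inequality at $i=2$ for every $\mu>0$, yet $\lambda(\mathcal{T})=1$. Since the vector $y$ constructed in the proof of Theorem \ref{main 1} is built from a positive principal eigenvector by taking minima over edges, it is positive, so the application in the paper is unaffected; but the hypothesis of the lemma should read ``positive vector $y$''.
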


The H-spectrum of a real tensor $\mathcal{T}$, denoted by $Hspec(\mathcal{T}%
),$ is defined to be the set of distinct H-eigenvalues of $\mathcal{T}$
\cite{ShaoShanWu}$.$ Namely,%

\[
Hspec(\mathcal{T})=\{\mu\text{
$\vert$
}\mu\text{ is an H-eigenvalue of }\mathcal{T}\}.
\]

\begin{lemma}
\label{non-connected}\bigskip Let $G=\cup_{i=1}^{t}G_{i},$ where $G_{i}$ is a
connected uniform hypergraph. Then
\begin{equation}
Hspec(\mathcal{L}(G))=\bigcup_{i=1}^{t}Hspec(\mathcal{L}(G_{i})), \label{L1}%
\end{equation}
and so%
\[
\lambda(\mathcal{L}(G))=\max_{1\leq i\leq t}\{\lambda(\mathcal{L}(G_{i}))\}.
\]

\end{lemma}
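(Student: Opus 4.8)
The plan is to exploit the block structure that a disjoint union imposes on the eigenequations $\mathcal{L}(G)x=\mu x^{[k-1]}$. First I would record the basic structural fact: since $G=\cup_{i=1}^{t}G_{i}$ is a disjoint union, every edge of $G$ lies entirely inside a single component $G_{i}$, so the entry $(\mathcal{A}(G))_{jj_{2}\cdots j_{k}}$ can be nonzero only when $j,j_{2},\dots,j_{k}$ all belong to the same component, and moreover the degree of a vertex in $G$ equals its degree in the component containing it, so $(\mathcal{D}(G))_{jj\cdots j}=d_{G_i}(j)$ for $j\in V(G_i)$. Consequently, for a vertex $j\in V(G_{i})$ the $j$-th eigenequation $(\mathcal{L}(G)x)_{j}=\mu x_{j}^{k-1}$ involves only the coordinates $x_{m}$ with $m\in V(G_{i})$, and it coincides exactly with the $j$-th eigenequation of $\mathcal{L}(G_{i})$ applied to the restriction $x^{(i)}:=(x_{m})_{m\in V(G_{i})}$. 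In other words the whole system decouples into the $t$ independent systems $\mathcal{L}(G_{i})x^{(i)}=\mu(x^{(i)})^{[k-1]}$.

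Next I would prove the two inclusions of \eqref{L1}. For the inclusion $\supseteq$, take $\mu\in Hspec(\mathcal{L}(G_{i}))$ with a real nonzero eigenvector $y$ on $V(G_{i})$, and extend $y$ to a vector $x$ on $V(G)$ by padding with zeros outside $V(G_{i})$. At a vertex $j\in V(G_{i})$ the decoupling above gives $(\mathcal{L}(G)x)_{j}=(\mathcal{L}(G_{i})y)_{j}=\mu y_{j}^{k-1}=\mu x_{j}^{k-1}$; at a vertex $j$ lying in another component, both $x_{j}$ and all coordinates appearing in its equation vanish, so $(\mathcal{L}(G)x)_{j}=0=\mu x_{j}^{k-1}$. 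Thus $x$ is a real nonzero eigenvector of $\mathcal{L}(G)$ for $\mu$, i.e. $\mu\in Hspec(\mathcal{L}(G))$. For the reverse inclusion $\subseteq$, let $\mu\in Hspec(\mathcal{L}(G))$ with real nonzero eigenvector $x$; the decoupling shows each restriction $x^{(i)}$ satisfies $\mathcal{L}(G_{i})x^{(i)}=\mu(x^{(i)})^{[k-1]}$. Since $x\neq 0$, at least one $x^{(i)}$ is nonzero, and for that index we get $\mu\in Hspec(\mathcal{L}(G_{i}))$. This establishes \eqref{L1}.

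Finally, the statement about $\lambda$ follows at once: $\lambda(\mathcal{L}(G))$ is by definition the largest element of the finite set $Hspec(\mathcal{L}(G))$, so the set identity \eqref{L1} yields
\[
\lambda(\mathcal{L}(G))=\max Hspec(\mathcal{L}(G))=\max_{1\leq i\leq t}\max Hspec(\mathcal{L}(G_{i}))=\max_{1\leq i\leq t}\lambda(\mathcal{L}(G_{i})).
\]
I do not expect a genuine obstacle here; the only points demanding care are the bookkeeping that an edge of a disjoint union never crosses components (which is precisely what makes the eigensystem block-diagonal), and the handling of the zero blocks in the two inclusions: in the forward direction one must check that the zero-padding actually solves the equations at the ``foreign'' vertices, and in the reverse direction one must observe that although some blocks $x^{(i)}$ may be zero, nonvanishing of the global $x$ guarantees at least one usable nonzero block.
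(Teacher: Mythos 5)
Your proof is correct and follows essentially the same route as the paper's: both arguments rest on the observation that the eigensystem of $\mathcal{L}(G)$ decouples blockwise over the components, with the inclusion $\subseteq$ obtained by restricting to a nonzero block and $\supseteq$ by zero-padding a component eigenvector. Your write-up is if anything slightly more careful than the paper's, since you explicitly verify the equations at vertices outside the chosen component, a step the paper leaves to the reader.
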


\bigskip

\begin{proof}
Without loss of the generality, we may assume that the vertices of $G$ are
ordered in such a way that if $i<j$, then any vertex in $G_{i}$ precedes any
vertex in $G_{j}$. \bigskip Let $x$ be a column vector of dimension $|V(G)|$.
We write $x$ in the following block form
\begin{equation}
x=(x_{1}^{T},x_{2}^{T},\cdots,x_{t}^{T})^{T}, \label{L3}%
\end{equation}
where $x_{i}$ is a column vector corresponding to the vertices of $G_{i}$.
Then it is not difficult to see that
\begin{equation}
\mathcal{L}(G)x=((\mathcal{L}(G_{1})x_{1})^{T},(\mathcal{L}(G_{2})x_{2}%
)^{T},\cdots,(\mathcal{L}(G_{t})x_{t})^{T})^{T}. \label{L4}%
\end{equation}
Now we prove Eq.(\ref{L1}). If $\lambda\in Hspec(\mathcal{L}(G))$ with a real
eigenvector $x$ as in (\ref{L3}), where $x_{j}\neq0$. Then by $\mathcal{L}%
(G)x=\lambda x^{[k-1]}$ and Eq.(\ref{L4}) we have
\[
\mathcal{L}(G_{j})x_{j}=\lambda x_{j}^{[k-1]}.
\]
Thus
\begin{equation}
\lambda\in Hspec(\mathcal{L}(G_{j}))\subseteq\bigcup_{i=1}^{t}%
Hspec(\mathcal{L}(G_{i})). \label{L5}%
\end{equation}
On the other hand, if $\lambda\in\bigcup_{i=1}^{t}Hspec(\mathcal{L}(G_{i}))$,
say, $\lambda\in Hspec(\mathcal{L}(G_{j}))$ for some $1\leq j\leq t$ with a
real eigenvector $x_{j}$. Take $x_{i}=0$ for all $i\neq j$ and take $x$ as in
(\ref{L3}). Then by Eq.(\ref{L4}) we can verify that \bigskip$\mathcal{L}%
(G)x=\lambda x^{[k-1]}$, thus $\lambda\in Hspec(\mathcal{L}(G))$. Combining
these two aspects, we obtain (\ref{L1}).

Particularly, we have%
\[
\lambda(\mathcal{L}(G))=\max_{1\leq i\leq t}\{\lambda(\mathcal{L}(G_{i}))\}.
\]

Similarly, we may prove that these results are also true for $\mathcal{Q}(G)$
and $\mathcal{A}(G).$
\end{proof}

\section{Largest H-eigenvalue of signless Laplacian tensor of $G^{k}$}

In this section we will prove that for any graph $G$ with maximum degree
$\Delta\geq2$, $\{\lambda(\mathcal{Q}(G^{k}))\}$ is a strictly decreasing
sequence$.$ First we will prove a more general result by constructing a new
vector and using Lemma \ref{inquality for nonpositive tensor}.

\begin{theorem}
\label{main 1} Let $H$ be a $k$-uniform ($k\geq2$) hypergraph, and $H^{\prime
}$ be obtained from $H$ by inserting a new pendent vertex in each edge. Then
$\lambda(\mathcal{Q(}H^{\prime}\mathcal{)})\leq\lambda(\mathcal{Q(}%
H\mathcal{)}),$ equality holding if and only if $H=tS_{1}^{k}$ for some $t$.
\end{theorem}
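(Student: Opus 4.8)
The plan is to apply Lemma \ref{inquality for nonpositive tensor} to the tensor $\mathcal{Q}(H^{\prime})$ with comparison value $\mu=\lambda(\mathcal{Q}(H))$, after first reducing to the case in which $H$ is connected. If $H=\cup_{i=1}^{t}H_{i}$ with each $H_{i}$ connected, then $H^{\prime}=\cup_{i=1}^{t}H_{i}^{\prime}$, and by the $\mathcal{Q}$-analogue of Lemma \ref{non-connected} we have $\lambda(\mathcal{Q}(H))=\max_{i}\lambda(\mathcal{Q}(H_{i}))$ and $\lambda(\mathcal{Q}(H^{\prime}))=\max_{i}\lambda(\mathcal{Q}(H_{i}^{\prime}))$, so the inequality for $H$ follows once it is established on each component. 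For the equality analysis I would record the bound $\lambda(\mathcal{Q}(H_{i}))\geq 2$ for a connected $H_i$ with an edge, which follows from the standard minimum-row-sum estimate for nonnegative tensors applied to $\mathcal{Q}$, whose $i$-th row sum is $2d_{H_i}(i)\geq 2$. Granting the connected equality case, $\lambda(\mathcal{Q}(H^{\prime}))=\lambda(\mathcal{Q}(H))$ forces the maximizing component $H_{j}^{\prime}$ to satisfy $\lambda(\mathcal{Q}(H_{j}^{\prime}))=\lambda(\mathcal{Q}(H_{j}))$, hence $H_{j}=S_{1}^{k}$ and the common value is $2$; the lower bound then forces every $H_{i}=S_{1}^{k}$, i.e.\ $H=tS_{1}^{k}$.

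Now let $H$ be connected, let $x>0$ be the principal eigenvector of $\mathcal{Q}(H)$, and set $\lambda:=\lambda(\mathcal{Q}(H))\geq 2$. Writing $P_{e}=\prod_{j\in e}x_{j}$, the eigenequations read $(\lambda-d_{H}(i))x_{i}^{k-1}=\sum_{e\ni i}\prod_{j\in e\setminus\{i\}}x_{j}$ for every $i\in V(H)$. I would define a positive vector $y$ on $V(H^{\prime})$ by $y_{i}=x_{i}$ for the original vertices $i\in V(H)$, and $y_{w_{e}}=(P_{e}/(\lambda-1))^{1/k}$ for the new pendent vertex $w_{e}$ inserted into edge $e$ (well defined since $\lambda>1$). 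Since $H$ is connected so is $H^{\prime}$, hence $\mathcal{Q}(H^{\prime})$ is weakly irreducible; as $H^{\prime}$ is $(k+1)$-uniform, Lemma \ref{inquality for nonpositive tensor} is applied with the relevant power $y^{[k]}$.

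The verification splits into two coordinate types. At a new pendent vertex $w_{e}$ (degree $1$) one has $(\mathcal{Q}(H^{\prime})y)_{w_{e}}=y_{w_{e}}^{k}+P_{e}$, and the choice of $y_{w_{e}}$ makes this exactly $\lambda y_{w_{e}}^{k}$, so equality holds there. At an original vertex $i$ (whose degree is unchanged, $d_{H^{\prime}}(i)=d_{H}(i)$) one gets $(\mathcal{Q}(H^{\prime})y)_{i}=d_{H}(i)x_{i}^{k}+\sum_{e\ni i}y_{w_{e}}\prod_{j\in e\setminus\{i\}}x_{j}$; multiplying the $H$-eigenequation by $x_{i}$ gives the clean identity $(\lambda-d_{H}(i))x_{i}^{k}=\sum_{e\ni i}P_{e}$. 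Hence the target inequality $(\mathcal{Q}(H^{\prime})y)_{i}\leq\lambda x_{i}^{k}$ reduces to $\sum_{e\ni i}P_{e}\geq\sum_{e\ni i}y_{w_{e}}P_{e}/x_{i}$, which I would prove termwise: for each $e\ni i$ the bound $P_{e}\geq y_{w_{e}}P_{e}/x_{i}$ is, after cancelling $P_{e}>0$ and using $y_{w_{e}}^{k}=P_{e}/(\lambda-1)$, equivalent to $(\lambda-1)x_{i}^{k-1}\geq\prod_{j\in e\setminus\{i\}}x_{j}$, and this follows from $\prod_{j\in e\setminus\{i\}}x_{j}\leq(\lambda-d_{H}(i))x_{i}^{k-1}\leq(\lambda-1)x_{i}^{k-1}$. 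This yields $\mathcal{Q}(H^{\prime})y\leq\lambda y^{[k]}$ and hence $\lambda(\mathcal{Q}(H^{\prime}))\leq\lambda(\mathcal{Q}(H))$. For strictness, the same chain shows that when $d_{H}(i)\geq 2$ one has $\prod_{j\in e\setminus\{i\}}x_{j}\leq(\lambda-d_{H}(i))x_{i}^{k-1}<(\lambda-1)x_{i}^{k-1}$, so $(\mathcal{Q}(H^{\prime})y)_{i}<\lambda y_{i}^{k}$ and Lemma \ref{inquality for nonpositive tensor} gives $\lambda(\mathcal{Q}(H^{\prime}))<\lambda(\mathcal{Q}(H))$. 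A connected $k$-uniform hypergraph has all degrees equal to $1$ only when it is a single edge $S_{1}^{k}$; thus the inequality is strict unless $H=S_{1}^{k}$, and in that remaining case $H^{\prime}=S_{1}^{k+1}$ with both largest H-eigenvalues equal to $2$, giving equality. Together with the reduction above this proves the statement.

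The main obstacle I anticipate is the design of $y$: the value $(P_{e}/(\lambda-1))^{1/k}$ at each inserted vertex is forced by balancing its own degree-one eigenequation, and the non-obvious point is that this single choice is simultaneously compatible with all the original-vertex inequalities. The termwise reduction above is precisely what makes this compatibility transparent, and it is also what cleanly pins down the equality case through the condition $d_{H}(i)\geq 2$.
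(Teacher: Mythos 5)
Your proposal is correct and follows the paper's overall architecture --- reduce to connected components via Lemma \ref{non-connected}, extend the principal eigenvector $x$ of $\mathcal{Q}(H)$ to a test vector $y$ on $V(H^{\prime})$, and invoke Lemma \ref{inquality for nonpositive tensor} --- but your test vector is genuinely different. The paper sets $y_{w_e}=\min\{x_u \mid u\in e\}$, which makes the inequality at the new pendent vertex the delicate one (it hinges on $d_H(w_1)\geq 1$) and forces a separate case analysis for equality (all components equal within each edge, hence $x$ constant by connectedness, hence $H$ regular of degree $\geq 2$). Your choice $y_{w_e}=\left(\prod_{u\in e}x_u/(\lambda-1)\right)^{1/k}$ instead balances each new vertex's eigenequation exactly, pushing all slack onto the original vertices, where the strictness criterion becomes simply $d_H(i)\geq 2$; the termwise verification $\prod_{j\in e\setminus\{i\}}x_j\leq(\lambda-d_H(i))x_i^{k-1}\leq(\lambda-1)x_i^{k-1}$, read off from the eigenequation at $i$ (which also guarantees $\lambda>d_H(i)$ so everything is positive), is correct, so the connected case is complete and its equality analysis is arguably cleaner than the paper's. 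The one place to tighten is the disconnected equality case: you record only $\lambda(\mathcal{Q}(H_i))\geq 2$, but to pass from $\lambda(\mathcal{Q}(H_i))=2$ to $H_i=S_1^k$ you need the strict statement that a connected component other than a single edge satisfies $\lambda(\mathcal{Q}(H_i))>2$ (e.g.\ because such a component has $\Delta(H_i)\geq 2$ and evaluating the eigenequation with the positive principal eigenvector at a maximum-degree vertex gives $\lambda>\Delta$); without this, a hypothetical connected $H_i\neq S_1^k$ with $\lambda(\mathcal{Q}(H_i))=2$ would not be excluded. This is a one-line fix, and the paper itself asserts the analogous fact as ``obvious.''
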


\begin{proof}
If $H=tS_{1}^{k}$ for some $t,$ then $\lambda(\mathcal{Q(}H^{\prime
}\mathcal{)})=\lambda(\mathcal{Q(}H\mathcal{)})=2.$ We suppose that $H\neq
tS_{1}^{k}$ for any $t.$

Denote by \
\[
E(H)=\{e_{1},e_{2},\cdot\cdot\cdot,e_{m}\},
\]
and let $v_{i}$ be the new pendent vertex inserted in $e_{i}$ for any $1\leq
i\leq m,$ i.e.,
\[
V(H^{\prime})=V(H)\cup\{v_{1},v_{2},\cdot\cdot\cdot,v_{m}\}.
\]

(1). First suppose that $H$ is a connected, so $\mathcal{Q}(H)$ is weakly
irreducible. Let $x$ be the principal eigenvector to $\lambda(\mathcal{Q}%
(H)),$ namely,
\[
\mathcal{Q}(H)x=\lambda(\mathcal{Q}(H))x^{[k-1]}.
\]
Now we construct a new vector $y$ (of dimension $|V(H^{\prime})|$) from $x$ by
adding $m$ components. If $w\in V(H),$ set $y_{w}=x_{w};$ if $w=v_{i},$ i.e.,
$w$ is a new pendent vertex inserted in $e_{i},$ set $y_{w}=\min\{x_{u}$ $|$
$u\in e_{i}\}$.

Now we will show $\mathcal{Q}(H^{\prime})y\leq\lambda(\mathcal{Q}(H))y^{[k]}.$

For any vertex $w\in V(H)$ we have
\begin{align}
(\mathcal{Q}(H^{\prime})y)_{w}  &  =d_{H^{^{\prime}}}(w)y_{w}^{k}%
+\sum_{\{w,v_{i},t_{2},\cdots,t_{k}\}\in E(H^{^{\prime}})}y_{v_{i}}y_{t_{2}%
}\cdot\cdot\cdot y_{t_{k}}\nonumber\\
&  \leq d_{H}(w)x_{w}^{k}+\sum_{\{w,v_{i},t_{2},\cdots,t_{k}\}\in
E(H^{^{\prime}})}x_{w}x_{t_{2}}\cdot\cdot\cdot x_{t_{k}}\\
&  =x_{w}\Big[d_{H}(w)x_{w}^{k-1}+\sum_{\{w,t_{2},\cdots,t_{k}\}\in
E(H)}x_{t_{2}}\cdot\cdot\cdot x_{t_{k}}\Big]\nonumber\\
&  =x_{w}(\mathcal{Q}(H)x)_{w}\nonumber\\
&  =x_{w}\lambda(\mathcal{Q}(H))x_{w}^{k-1}\nonumber\\
&  =\lambda(\mathcal{Q}(H))x_{w}^{k}\nonumber\\
&  =\lambda(\mathcal{Q}(H))y_{w}^{k}.\nonumber
\end{align}
Ineq. (4) is due to the fact $y_{v_{i}}\leq x_{w}.$ Furthermore, if
$x_{w}>y_{v_{i}},$ namely, $x_{w}>\min\{x_{u}$ $|$ $u\in e_{i}\}$ for some
edge $e_{i}$ containing $w,$ Inequality (4) becomes strict.

For $w=v_{i}$ for some $1\leq i\leq m,$ i.e., $w$ is a new pendent vertex
inserted in $e_{i},$ we suppose $e_{i}=\{w_{1},w_{2},\cdot\cdot\cdot,w_{k}\}$
and $x_{w_{1}}=\min\{x_{w_{1}},x_{w_{2}},...,x_{w_{k}}\}.$ Then we have
\begin{align}
(\mathcal{Q}(H^{\prime})y)_{w}  &  =y_{w}^{k}+y_{w_{1}}y_{w_{2}}\cdot
\cdot\cdot y_{w_{k}}\nonumber\\
&  =x_{w_{1}}^{k}+x_{w_{1}}x_{w_{2}}\cdot\cdot\cdot x_{w_{k}}\nonumber\\
&  \leq x_{w_{1}}\Big[d_{H}(w_{1})x_{w_{1}}^{k-1}+x_{w_{2}}\cdot\cdot\cdot
x_{w_{k}}\Big]\\
&  \leq x_{w_{1}}(\mathcal{Q}(H)x)_{w_{1}}\nonumber\\
&  =\lambda(\mathcal{Q}(H))x_{w_{1}}^{k}\nonumber\\
&  =\lambda(\mathcal{Q}(H))y_{w}^{k}.\nonumber
\end{align}
Ineq. (5) is due to the fact $d_{H}(w_{1})\geq1.$ Furthermore, if $d_{H}%
(w_{1})>1,$ Ineq. (5) becomes strict. So we have proved $\mathcal{Q}%
(H^{\prime})y\leq\lambda(\mathcal{Q}(H))y^{[k]}.$

If there exists $\{w,w^{\prime}\}\subseteq e_{i}$ for some $e_{i}\in E(H)$
such that $x_{w}>x_{w^{\prime}},$ then $x_{w}>\min\{x_{u}$ $|$ $u\in e_{i}\}$,
and then Ineq. (4) becomes strict. Now we suppose all the vertices in each
edge have the equal corresponding component in $x.$ Furthermore, since $H$ is
connected, \ we know that all the components of $x$ are equal, thus $H$ is
regular. We suppose $H$ is a $d$-regular. Since $H\neq S_{1}^{k}$, we have
$d\geq2.$ Now for some edge $e_{i}$, set $x_{w_{1}}=\min\{x_{u}$ $|$ $u\in
e_{i}\},$ then $d_{H}(w_{1})=d>1,$ and Ineq. (5) becomes a strict one.

Thus we have proved that $(\mathcal{Q}(H^{\prime})y)_{i}<\lambda
(\mathcal{Q}(H))y_{i}^{[k]}$ for some $i,$ therefore, $\lambda(\mathcal{Q}%
(H^{\prime}))<\lambda(\mathcal{Q}(H))$ by Lemma
\ref{inquality for nonpositive tensor}.

(2) If $H=H_{1}\cup H_{2}\cup\cdot\cdot\cdot\cup H_{t},$ where $H_{i}$ is a
connected component of $H$ for $1\leq i\leq t$ and $t\geq2,$ then
\[
H^{\prime}=H_{1}^{\prime}\cup H_{2}^{\prime}\cup\cdot\cdot\cdot\cup
H_{t}^{\prime}.
\]
Since $H\neq tS_{1}^{k},$ we may suppose that $H_{i}\neq S_{1}^{k}$ for $1\leq
i\leq t^{\prime}\leq t,$ and $t^{\prime}\geq1.$ Then for $1\leq i\leq
t^{\prime}$ we have $\lambda(\mathcal{Q}(H_{i}^{\prime}))<\lambda
(\mathcal{Q}(H_{i}))$ by the above arguments. It is obvious that
$\lambda(\mathcal{Q}(H_{i}))>\lambda(\mathcal{Q}(S_{1}^{k}))$ and
$\lambda(\mathcal{Q}(H_{i}^{\prime}))>\lambda(\mathcal{Q}(S_{1}^{k+1}))$ hold.
By Lemma \ref{non-connected} we have
\[
\lambda(\mathcal{Q}(H^{\prime}))=\max_{1\leq i\leq t^{\prime}}\{\lambda
(\mathcal{Q}(H_{i}^{\prime}))\}<\max_{1\leq i\leq t^{\prime}}\{\lambda
(\mathcal{Q}(H_{i}))\}=\lambda(\mathcal{Q}(H)).
\]

The proof is completed.
\end{proof}

\bigskip By Theorem \ref{main 1}, we have the following result for
$\{\lambda(\mathcal{Q}(G^{k}))\},$ which affirm Conjecture \ref{con} for
$\mathcal{Q}(G^{k}).$

\begin{theorem}
\label{main 1 Q} Let $G$ be an ordinary graph with maximum $\Delta\geq2$. When
$k\geq2$ we have $\lambda(\mathcal{Q}(G^{k+1}))<\lambda(\mathcal{Q}(G^{k}))$.
\end{theorem}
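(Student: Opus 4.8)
The plan is to reduce Theorem~\ref{main 1 Q} to Theorem~\ref{main 1} by recognizing that passing from $G^k$ to $G^{k+1}$ is exactly the operation of inserting one new pendent vertex in each edge. First I would examine Definition~\ref{power graph}: the edges of $G^k$ are the sets $e\cup\{i_{e,1},\dots,i_{e,k-2}\}$ for $e\in E(G)$, so each edge of $G^k$ has size $k$ and contains the two original endpoints of $e$ together with $k-2$ auxiliary vertices, each of which appears in exactly one edge and hence has degree one. Passing to $G^{k+1}$ replaces each edge by $e\cup\{i_{e,1},\dots,i_{e,k-1}\}$, i.e.\ it adjoins one more fresh auxiliary vertex $i_{e,k-1}$ to each edge. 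I would verify carefully that this newly adjoined vertex is a pendent vertex (degree one) and that the rest of the incidence structure is unchanged, so that $G^{k+1}$ is precisely the hypergraph $(G^k)'$ obtained from $H=G^k$ by the pendent-insertion operation of Theorem~\ref{main 1}.

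With that identification in hand, I would apply Theorem~\ref{main 1} with $H=G^k$, which gives $\lambda(\mathcal{Q}(G^{k+1}))=\lambda(\mathcal{Q}((G^k)'))\le\lambda(\mathcal{Q}(G^k))$, with equality if and only if $G^k=tS_1^k$ for some $t$. To get the strict inequality I must rule out $G^k=tS_1^k$. Here I would use the hypothesis $\Delta\ge2$: the base graph $G$ has a vertex $v$ of degree at least $2$, and under the power construction this vertex remains an original vertex of $G^k$ lying in at least two edges, so $d_{G^k}(v)\ge2$. On the other hand, in a disjoint union of single-edge sunflowers $tS_1^k$ every vertex has degree one, so $G^k\ne tS_1^k$. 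Hence the equality case of Theorem~\ref{main 1} is excluded and the inequality is strict.

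The main obstacle I anticipate is bookkeeping at the level of the vertex and edge sets, namely making the isomorphism $(G^k)'\cong G^{k+1}$ genuinely precise rather than merely intuitively clear. One must match the auxiliary vertices $\{i_{e,1},\dots,i_{e,k-2}\}$ of $G^k$ with a subset of the auxiliary vertices $\{i_{e,1},\dots,i_{e,k-1}\}$ of $G^{k+1}$ edge by edge, check that this matching is consistent across the shared original vertices $V(G)$, and confirm that the single leftover vertex per edge is exactly the inserted pendent vertex; since $\lambda(\mathcal{Q}(\cdot))$ is an isomorphism invariant (the signless Laplacian tensor is defined purely combinatorially from degrees and edges), this identification suffices. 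A small subtlety for the case $k=2$ is that $G^2=G$ has no auxiliary vertices at all, so the insertion adds exactly one pendent vertex per edge of the ordinary graph $G$; I would note that Theorem~\ref{main 1} already covers $k\ge2$, so no separate treatment is needed. Once the isomorphism and the degree argument are stated, the conclusion is immediate.
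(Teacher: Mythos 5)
Your proposal is correct and is exactly the paper's argument: the paper states Theorem~\ref{main 1 Q} as an immediate consequence of Theorem~\ref{main 1}, implicitly using the identification $G^{k+1}=(G^k)'$ and the fact that $\Delta\ge 2$ rules out $G^k=tS_1^k$. You have simply made explicit the bookkeeping that the paper leaves to the reader.
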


The notion of odd-bipartite even-uniform hypergraphs was introduced in
\cite{HuQi2014DAM}.

\begin{definition}
\cite{HuQi2014DAM} Let $k$ be even and $G=(V,E)$ be a $k$-uniform hypergraph.
It is called odd-bipartite if either it is trivial (i.e., $E=\emptyset$) or
there is a disjoint partition of the vertex set $V$ as $V=V_{1}\cup V_{2}$
such that $V_{1},V_{2}\neq\emptyset$ and every edge in $E$ intersects $V_{1}$
with exactly an odd number of vertices.
\end{definition}

For even uniform odd-bipartite hypergraph, the following result was proved in
\cite{HuQiXie2015} (see \ Theorem 5.8 of \cite{HuQiXie2015}), or in
\cite{ShaoShanWu} (see \ Theorem 2.2 of \cite{ShaoShanWu}).

\begin{lemma}
\label{L and Q} \cite{HuQiXie2015}\bigskip\ \cite{ShaoShanWu} Let $G$ be a
connected even uniform odd-bipartite hypergraph. Then $\lambda(\mathcal{L}%
(G))=\lambda(\mathcal{Q}(G))$.
\end{lemma}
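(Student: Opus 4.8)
The plan is to exhibit an explicit sign-change (diagonal) similarity carrying $\mathcal{Q}(G)$ to $\mathcal{L}(G)$, which forces their H-spectra, and in particular their largest H-eigenvalues, to coincide. Concretely, using the odd-bipartite partition $V=V_1\cup V_2$, I would define a $\pm1$ diagonal matrix $D=\mathrm{diag}(d_v)_{v\in V}$ by $d_v=-1$ for $v\in V_1$ and $d_v=1$ for $v\in V_2$, and then show that the map $z\mapsto Dz$ (that is, $x_v=d_v z_v$) sends eigenvectors of $\mathcal{L}(G)$ to eigenvectors of $\mathcal{Q}(G)$ with the same eigenvalue.

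The heart of the argument is a direct verification of the eigenequation. Expanding the $w$-th coordinate as $(\mathcal{Q}(G)x)_w = d_G(w)\,x_w^{k-1}+\sum_{e\ni w}\prod_{u\in e\setminus\{w\}}x_u$ and substituting $x_u=d_u z_u$, the key computation happens edge by edge. Since $k$ is even, $k-1$ is odd, so $d_w^{k-1}=d_w$; and since $G$ is odd-bipartite, $|e\cap V_1|$ is odd, so $\prod_{u\in e}d_u=-1$, whence $\prod_{u\in e\setminus\{w\}}d_u=-d_w$. These two facts pull a common factor $d_w$ out of both the diagonal and the adjacency contributions while flipping the sign of the latter, giving $(\mathcal{Q}(G)x)_w=d_w\big[d_G(w)z_w^{k-1}-\sum_{e\ni w}\prod_{u\in e\setminus\{w\}}z_u\big]=d_w(\mathcal{L}(G)z)_w=d_w\,\lambda z_w^{k-1}=\lambda x_w^{k-1}$. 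The diagonal degree entries are left invariant automatically, since $d_w^{-(k-1)}d_w^{k-1}=1$.

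Because $D$ is real and invertible, this correspondence is a bijection between real eigenvectors, so $Hspec(\mathcal{L}(G))=Hspec(\mathcal{Q}(G))$ as sets, and taking the maximum yields $\lambda(\mathcal{L}(G))=\lambda(\mathcal{Q}(G))$. I would note that the connectedness (weak irreducibility) hypothesis is not actually needed for this spectral correspondence; it only enters if one wishes to identify the common largest H-eigenvalue with the Perron spectral radius and its positive eigenvector via Theorem \ref{Perron-Frobenius}.

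The step I expect to be the main obstacle, or at least the point requiring care, is the parity bookkeeping: one must confirm that $k$ even (so that $d_w^{k-1}=d_w$) together with $|e\cap V_1|$ odd (so that $\prod_{u\in e}d_u=-1$) is \emph{exactly} the combination that makes the sign flip uniform across all edges and lets the factor $d_w$ pull out cleanly. If either parity failed, the per-edge sign would not be constant and the substituted expression would no longer factor through $(\mathcal{L}(G)z)_w$, so both hypotheses of the lemma are essential and are consumed precisely here.
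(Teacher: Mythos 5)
Your argument is correct, but note that the paper does not actually prove this lemma: it is imported verbatim from the references (Theorem 5.8 of \cite{HuQiXie2015} and Theorem 2.2 of \cite{ShaoShanWu}), so there is no internal proof to compare against. What you have written is essentially the standard sign-similarity proof from those sources: the diagonal matrix $D$ with $d_v=-1$ on $V_1$ and $d_v=1$ on $V_2$ satisfies $D^2=I$, the substitution $x=Dz$ turns $(\mathcal{Q}(G)x)_w$ into $d_w(\mathcal{L}(G)z)_w$ because $d_w^{k-1}=d_w$ ($k$ even) and $\prod_{u\in e\setminus\{w\}}d_u=-d_w$ ($|e\cap V_1|$ odd), and since the map is an involution on nonzero real vectors it gives $Hspec(\mathcal{L}(G))=Hspec(\mathcal{Q}(G))$, hence equal maxima (both H-spectra are nonempty by Theorem \ref{Perron-Frobenius} applied to $\mathcal{Q}$). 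Your parity bookkeeping is exactly right, and your observation that connectedness is not needed is also correct; it would in fact subsume the paper's Lemma \ref{L and Q II}, which the authors instead derive from the connected case by decomposing into components via Lemma \ref{non-connected}. The only thing I would tighten is the phrase about degree entries being ``left invariant since $d_w^{-(k-1)}d_w^{k-1}=1$'': in the coordinatewise computation you are doing, the relevant identity is simply $x_w^{k-1}=d_w z_w^{k-1}$, which is already the factor $d_w$ you pull out of the whole bracket.
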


In fact Lemma \ref{L and Q} is also true for general even uniform
odd-bipartite hypergraph $G$, see Lemma \ref{L and Q II}.

\begin{lemma}
\label{L and Q II} Let $G$ be an even uniform odd-bipartite hypergraph. Then
$\lambda(\mathcal{L}(G))=\lambda(\mathcal{Q}(G))$.
\end{lemma}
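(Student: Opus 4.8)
The plan is to reduce Lemma~\ref{L and Q II} to the already-established connected case, Lemma~\ref{L and Q}, by decomposing $G$ into its connected components and invoking the spectral-decomposition Lemma~\ref{non-connected}. Write $G=\cup_{i=1}^{t}G_{i}$, where each $G_{i}$ is a connected $k$-uniform hypergraph. The first observation I would make is that the odd-bipartite property is inherited by each component: if $V=V_{1}\cup V_{2}$ is an odd-bipartite partition of $G$, then restricting to $V(G_{i})$ gives $V(G_{i})=(V_{1}\cap V(G_{i}))\cup(V_{2}\cap V(G_{i}))$, and every edge of $G_{i}$ still meets $V_{1}$ in an odd number of vertices. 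The only subtlety is whether each part of the restricted partition is nonempty; a component consisting of a trivial (edgeless) piece falls under the ``trivial'' clause of the definition, and a component with at least one edge automatically meets both $V_{1}$ and $V_{2}$ since $k$ is even and each edge has an odd intersection with $V_{1}$ (hence also a nonempty, in fact odd-sized, intersection with $V_{2}$). So each $G_{i}$ is a connected even uniform odd-bipartite hypergraph.

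Next I would apply Lemma~\ref{L and Q} componentwise to obtain $\lambda(\mathcal{L}(G_{i}))=\lambda(\mathcal{Q}(G_{i}))$ for every $i$. Then I would combine these equalities using Lemma~\ref{non-connected}, which gives $\lambda(\mathcal{L}(G))=\max_{1\leq i\leq t}\lambda(\mathcal{L}(G_{i}))$ and, by the analogous statement for $\mathcal{Q}$ noted at the end of that lemma's proof, $\lambda(\mathcal{Q}(G))=\max_{1\leq i\leq t}\lambda(\mathcal{Q}(G_{i}))$. Taking the maximum over $i$ on both sides of the componentwise equalities yields
\[
\lambda(\mathcal{L}(G))=\max_{1\leq i\leq t}\lambda(\mathcal{L}(G_{i}))=\max_{1\leq i\leq t}\lambda(\mathcal{Q}(G_{i}))=\lambda(\mathcal{Q}(G)),
\]
which is the desired conclusion.

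I expect the only genuine point requiring care to be the inheritance of the odd-bipartite structure by the components, specifically confirming that a nontrivial component genuinely intersects both $V_{1}$ and $V_{2}$ so that the restricted partition is a bona fide odd-bipartite partition rather than a degenerate one. This is where the evenness of $k$ is essential: an odd intersection with $V_{1}$ cannot equal the full even-sized edge, forcing a nonempty intersection with $V_{2}$ as well. Once that is settled, the remainder is a routine application of the two cited lemmas, so the argument is short and poses no serious obstacle.
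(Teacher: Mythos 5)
Your proposal is correct and follows essentially the same route as the paper: decompose $G$ into connected components, note that each component inherits the even uniform odd-bipartite property, apply Lemma~\ref{L and Q} componentwise, and combine via Lemma~\ref{non-connected}. The paper simply asserts the inheritance of odd-bipartiteness by the components, whereas you spell out the (correct) parity argument; otherwise the two proofs coincide.
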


\begin{proof}
By Lemma \ref{L and Q}, we only need to consider the case that $G$ is not
connected. Set $G=\cup_{i=1}^{t}G_{i},$ where $G_{i}$ is a connected component
of $G$ for $1\leq i\leq t$ and $t\geq2.$ Since $G$ is even uniform and
odd-bipartite, each $G_{i}$ is connected even uniform and odd-bipartite. Thus
by Lemma \ref{L and Q} we have $\lambda(L(G_{i}))=\lambda(Q(G_{i}%
))\ (i=1,\cdots,t),$ and moreover, by Lemma \ref{non-connected} we have
\[
\lambda(\mathcal{L}(G))=\max_{1\leq i\leq t}\{\lambda(\mathcal{L}%
(G_{i}))\}=\max_{1\leq i\leq t}\{\lambda(\mathcal{Q}(G_{i}))\}=\lambda
(\mathcal{Q}(G)).
\]

The proof is completed.
\end{proof}

\begin{remark}
We can mention here that the condition $\lambda(\mathcal{L}(G))=\lambda
(\mathcal{Q}(G))$ does not imply that $G$ is an even uniform odd-bipartite
hypergraph. In fact take $G=G_{1}\cup G_{2}$, where $G_{1}$ is not
odd-bipartite and $G_{2}$ is a sunflower with size $\Delta$ satisfying
\[
\Delta>\lambda(\mathcal{Q}(G_{1}))\geq\lambda(\mathcal{L}(G_{1})).
\]
From Proposition 3.2 of \cite{HuQiXie2015} and Lemma \ref{L and Q}, we know
that
\[
\lambda(\mathcal{L}(G_{2}))=\lambda(\mathcal{Q}(G_{2}))>\Delta.
\]
Then $G$ is not odd-bipartite (since $G_{1}$ is not), but we have
\[
\lambda(\mathcal{L}(G))=\max_{1\leq i\leq2}\{\lambda(\mathcal{L}%
(G_{i}))\}=\lambda(\mathcal{L}(G_{2}))=\lambda(\mathcal{Q}(G_{2}))=\max_{1\leq
i\leq2}\{\lambda(\mathcal{Q}(G_{i}))\}=\lambda(\mathcal{Q}(G)).
\]

\end{remark}

Obviously, when $k$ is even and $k\geq4$, the $k$-th power hypergraph $G^{k}$
is odd-bipartite. Then Lemma \ref{L and Q II} and Theorem \ref{main 1 Q} imply
that Conjecture \ref{con} is true for $\mathcal{L}(G^{k}).$

\begin{theorem}
\label{for Laplacian} Let $G=(V,E)$ be an ordinary graph with maximum
$\Delta\geq2$, $k=2r$ be even and $G^{k}$ be the $k$-power hypergraph of $G$.
Then $\{\lambda(\mathcal{L}(G^{k}))=\lambda(\mathcal{Q}(G^{k}))\}$ is a
strictly decreasing sequence.
\end{theorem}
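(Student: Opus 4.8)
The plan is to assemble the statement from two ingredients already in hand, since all the analytic work has been carried out in Theorem \ref{main 1 Q} and Lemma \ref{L and Q II}. The sequence is indexed by the even numbers $k = 2r$, and for each term I must verify two things: the equality $\lambda(\mathcal{L}(G^k)) = \lambda(\mathcal{Q}(G^k))$ written inside the braces, and strict decrease of the values as $r$ grows.

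First I would establish the equality via odd-bipartiteness. Each edge of $G^k$ has the form $\{u,v\} \cup \{i_{e,1},\dots,i_{e,k-2}\}$, i.e. two original endpoints together with $k-2$ inserted vertices, the latter private to that edge. Placing into $V_1$ all original vertices together with exactly one inserted vertex of each edge makes every edge meet $V_1$ in exactly three (hence an odd number of) vertices; since $k \geq 4$ leaves at least one inserted vertex of each edge in $V_2$, the partition is proper, so $G^k$ is odd-bipartite. Lemma \ref{L and Q II} then gives $\lambda(\mathcal{L}(G^k)) = \lambda(\mathcal{Q}(G^k))$ for every even $k \geq 4$.

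Next I would obtain strict monotonicity from Theorem \ref{main 1 Q}, which yields $\lambda(\mathcal{Q}(G^{k+1})) < \lambda(\mathcal{Q}(G^k))$ for all $k \geq 2$. Chaining two consecutive instances gives $\lambda(\mathcal{Q}(G^{2r+2})) < \lambda(\mathcal{Q}(G^{2r+1})) < \lambda(\mathcal{Q}(G^{2r}))$, so the even subsequence $\{\lambda(\mathcal{Q}(G^{2r}))\}$ is strictly decreasing. Feeding in the equality from the previous paragraph, $\{\lambda(\mathcal{L}(G^{2r}))\}$ coincides termwise with $\{\lambda(\mathcal{Q}(G^{2r}))\}$ for $r \geq 2$ and is therefore strictly decreasing as well, which is Conjecture \ref{con} for $\mathcal{L}(G^k)$.

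There is no substantive obstacle left; the only point needing care is the boundary term $k = 2$, where $G^2 = G$ and the equality $\lambda(\mathcal{L}(G)) = \lambda(\mathcal{Q}(G))$ amounts to $G$ being bipartite and can fail. I would therefore either restrict the coupled ``$\mathcal{L} = \mathcal{Q}$'' sequence to $k \geq 4$, or record separately that the signless-Laplacian sequence alone is strictly decreasing already from $k = 2$ (directly from Theorem \ref{main 1 Q}), with the Laplacian identification switched on from $k = 4$ onward.
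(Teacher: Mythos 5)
Your proposal is correct and follows essentially the same route as the paper: the paper likewise observes that $G^{k}$ is odd-bipartite for even $k\geq 4$, invokes Lemma \ref{L and Q II} for the equality $\lambda(\mathcal{L}(G^{k}))=\lambda(\mathcal{Q}(G^{k}))$, and cites Theorem \ref{main 1 Q} for strict decrease. Your explicit partition witnessing odd-bipartiteness and your caveat about the boundary term $k=2$ (where the equality requires bipartiteness of $G$ itself) are details the paper leaves implicit, but they do not change the argument.
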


The value $\lim_{k\rightarrow\infty}\lambda(\mathcal{Q}(G^{k}))$ was
determined for a regular graph $G$ by Zhou et al. in \cite{ZhouSunWangBu}.

\begin{lemma}
\label{for regular lim}\cite{ZhouSunWangBu} For any $d$-regular graph $G$ with
$d\geq2,$ we have $\lim_{k\rightarrow\infty}\lambda(\mathcal{Q}(G^{k}))=d.$
\end{lemma}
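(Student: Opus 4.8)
The plan is to reduce to the connected case and then exhibit the principal eigenvector of $\mathcal{Q}(G^{k})$ explicitly. If $G$ is a disconnected $d$-regular graph, write $G=\cup_{i}G_{i}$ with each $G_{i}$ connected and $d$-regular; then $G^{k}=\cup_{i}G_{i}^{k}$ and by Lemma \ref{non-connected} $\lambda(\mathcal{Q}(G^{k}))=\max_{i}\lambda(\mathcal{Q}(G_{i}^{k}))$, a maximum over finitely many sequences. Hence it suffices to treat connected $G$, and I will assume this from now on, so that $\mathcal{Q}(G^{k})$ is weakly irreducible.

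For connected $G$ I would look for a positive eigenvector of $\mathcal{Q}(G^{k})$ that takes only two values: the constant $1$ on the $|V(G)|$ original (``core'') vertices and a constant $c=c_{k}\in(0,1)$ on all the inserted pendent vertices. Each edge of $G^{k}$ (for $k\geq3$) consists of two core vertices together with $k-2$ pendent vertices. Evaluating $\mathcal{Q}(G^{k})x$ at a core vertex $w$ (degree $d$, with $d$ incident edges each contributing $1\cdot c^{k-2}$) gives $(\mathcal{Q}(G^{k})x)_{w}=d+dc^{k-2}$, while at a pendent vertex $v$ (degree $1$, lying in a single edge with two core vertices and $k-3$ other pendent vertices) it gives $(\mathcal{Q}(G^{k})x)_{v}=c^{k-1}+c^{k-3}$. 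Thus the two-valued vector is a genuine eigenvector precisely when a single scalar $\mu$ satisfies both $\mu=d(1+c^{k-2})$ and $\mu=1+c^{-2}$ simultaneously.

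I would pin down $c_{k}$ by the intermediate value theorem: the function $h(c)=1+c^{-2}-d(1+c^{k-2})$ tends to $+\infty$ as $c\to0^{+}$, equals $2(1-d)<0$ at $c=1$, and has $h'(c)<0$ on $(0,1)$, so there is a unique root $c_{k}\in(0,1)$; set $\mu_{k}=1+c_{k}^{-2}$. The associated two-valued vector is then a positive eigenvector of the weakly irreducible nonnegative tensor $\mathcal{Q}(G^{k})$, so by Theorem \ref{Perron-Frobenius} it is the principal eigenvector and $\lambda(\mathcal{Q}(G^{k}))=\mu_{k}$. In particular $\lambda(\mathcal{Q}(G^{k}))=d(1+c_{k}^{k-2})>d$ for every $k$.

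Finally I would extract the limit. By Theorem \ref{main 1 Q} (with $\Delta=d\geq2$) the sequence $\mu_{k}=\lambda(\mathcal{Q}(G^{k}))$ is strictly decreasing and bounded below by $d$, hence $\mu_{k}\downarrow L\geq d$. From $\mu_{k}=1+c_{k}^{-2}$ we get $c_{k}^{2}\to1/(L-1)$, i.e. $c_{k}\to1/\sqrt{L-1}$, and from $\mu_{k}=d(1+c_{k}^{k-2})$ we get $c_{k}^{k-2}=(\mu_{k}-d)/d\to(L-d)/d$. If $L>d$ then $L-1>1$, so $c_{k}$ converges to a limit strictly less than $1$, which forces $c_{k}^{k-2}\to0$ and hence $(L-d)/d=0$, a contradiction; therefore $L=d$. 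The main obstacle is the borderline case $d=2$, where $c_{k}\to1$ and $c_{k}^{k-2}$ is an indeterminate $1^{\infty}$; the monotone-convergence-plus-contradiction argument above is designed to sidestep this rather than evaluate the indeterminate form directly.
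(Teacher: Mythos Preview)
Your argument is correct. The paper does not actually prove this lemma; it quotes it from \cite{ZhouSunWangBu} and uses it as a black box in the proof of Theorem~\ref{main 2}. So there is no proof in the paper to compare against directly.

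That said, your approach mirrors exactly what the paper does in the neighbouring Lemma~\ref{lemma for sunflower} for the sunflower $S_{d}^{k}$: exploit the symmetry so that the principal eigenvector is two-valued, write down the two eigenequations, eliminate, and analyse the limit. Your use of regularity is the right substitute for the sunflower's symmetry, and the intermediate-value/monotonicity argument for the existence of a unique $c_{k}\in(0,1)$ is clean. Two small remarks: (i) you do not really need to invoke Theorem~\ref{main 1 Q} for the monotonicity---from $\mu_{k}=d(1+c_{k}^{k-2})$ with $c_{k}\in(0,1)$ you already have $d<\mu_{k}<2d$, and then your contradiction argument applied to any subsequential limit shows every limit point equals $d$; this avoids any worry about logical dependence; (ii) in the $d=2$ borderline you flagged, your contradiction argument is indeed sufficient and no separate treatment is needed, since the hypothesis $L>d$ forces $L-1>1$ regardless of the value of $d$.
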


Now we will prove that when $k$ goes to infinity, $\lambda(\mathcal{Q}%
(S_{d}^{k}))$ converges to $d,$ the maximum degree of $S_{d}^{k}.$

\begin{lemma}
\label{lemma for sunflower}When $k\geq2,d\geq2$ we have $\lim_{k\rightarrow
\infty}\lambda(\mathcal{Q}(S_{d}^{k}))=d.$
\end{lemma}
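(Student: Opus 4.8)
The plan is to compute the eigenequation for the signless Laplacian tensor of the sunflower $S_d^k$ directly, exploiting its high symmetry. The sunflower has a single heart vertex of degree $d$, and $d$ petals, each consisting of $k-1$ pendent vertices of degree one. By the Perron--Frobenius theorem (Theorem \ref{Perron-Frobenius}) the principal eigenvector $x$ is positive, and by uniqueness up to scaling together with the automorphism group of $S_d^k$ (which permutes the petals transitively and permutes the $k-1$ vertices within each petal), all heart-adjacent structure is symmetric. So I would first argue that $x$ takes just two distinct values: one value, say $a$, on the heart, and a single common value, say $b$, on every petal vertex. This reduces the tensor eigenequation $\mathcal{Q}(S_d^k)x=\lambda x^{[k-1]}$ to a pair of scalar equations.

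Next I would write out those two scalar equations explicitly. For the heart vertex of degree $d$, equation (2) gives $\lambda a^{k-1}=d\,a^{k-1}+d\,b^{k-1}$, since the heart lies in $d$ edges and each contributes a product of the $k-1$ petal-vertex values $b^{k-1}$ (the factor $\tfrac{1}{(k-1)!}$ in $\mathcal{A}$ cancels against the $(k-1)!$ orderings of the remaining vertices in each edge). For a petal vertex of degree one, the equation reads $\lambda b^{k-1}=b^{k-1}+a\,b^{k-2}$, where the single edge containing that vertex contributes the product of $a$ and the remaining $k-2$ petal values. Setting $\lambda=\lambda(\mathcal{Q}(S_d^k))$ and writing $t=b/a>0$, these become
\begin{equation}
\lambda = d + d\,t^{\,k-1}, \qquad \lambda\,t^{\,k-1} = t^{\,k-1} + t^{\,k-2}.
\end{equation}
From the second equation I get $\lambda = 1 + t^{-1}$, i.e.\ $t=1/(\lambda-1)$, and substituting into the first yields a single equation $\lambda = d + d\,(\lambda-1)^{-(k-1)}$ determining $\lambda$ as a function of $k$.

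The final step is the limit analysis. I would show $\lambda(\mathcal{Q}(S_d^k))>d$ for every finite $k$ (clear from $\lambda=d+d\,t^{k-1}$ with $t>0$), so $\lambda-1\ge d-1>1$ since $d\ge2$; hence $(\lambda-1)^{-(k-1)}\to 0$ as $k\to\infty$, forcing $\lambda\to d$. To make this rigorous I would bound $\lambda$ in a fixed interval $[d,\,2d]$ for all large $k$ (an easy monotonicity or fixed-point argument on $f(\lambda)=d+d(\lambda-1)^{-(k-1)}$ shows the root stays bounded), so that $\lambda-1$ is bounded below by a constant strictly greater than $1$, giving geometric decay of the correction term. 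I expect the main obstacle to be the justification that the principal eigenvector really is constant on all petal vertices: this needs the uniqueness clause (3) of Theorem \ref{Perron-Frobenius} combined with the transitive symmetry of $S_d^k$, rather than any ad hoc guess, so I would state that symmetry reduction carefully before doing the routine algebra.
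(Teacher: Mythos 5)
Your reduction is exactly the paper's: Perron--Frobenius uniqueness plus the transitive symmetry forces the principal eigenvector to take one value $a$ on the heart and one value $b$ on all pendent vertices; the two scalar equations you write are the ones in the paper, and your relation $\lambda=d+d(\lambda-1)^{-(k-1)}$ is just the paper's $(\lambda_k-d)(\lambda_k-1)^{k-1}=d$ rearranged. The algebra and the observation $\lambda_k>d$ are correct.

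The one genuine gap is in the final limit step, and it bites precisely when $d=2$. You write ``$\lambda-1\ge d-1>1$ since $d\ge2$'', but $d-1=1$ when $d=2$; and your proposed repair --- confining $\lambda$ to $[d,2d]$ so that $\lambda-1$ is bounded below by a constant strictly greater than $1$ --- cannot succeed, because the statement you are proving forces $\lambda_k-1\to 1$ when $d=2$, so no such uniform constant exists. Hence the claimed geometric decay of $(\lambda_k-1)^{-(k-1)}$ is unjustified in that case. Two easy patches: (i) note that $\{\lambda_k\}$ is strictly decreasing (Theorem \ref{main 1} applies, since $S_d^{k+1}$ arises from $S_d^k$ by inserting a pendent vertex in each edge --- this is essentially what the paper invokes) and bounded below by $d$, hence converges to some $L\ge d$; if $L>d$ then $\lambda_k-1\ge L-1>1$ uniformly, so $d=(\lambda_k-d)(\lambda_k-1)^{k-1}\ge (L-d)(L-1)^{k-1}\to\infty$, a contradiction; or (ii) argue directly that for any $c>0$, if $\lambda_k-d\ge c$ then $d=(\lambda_k-d)(\lambda_k-1)^{k-1}\ge c(1+c)^{k-1}$, which fails for large $k$, so $\lambda_k-d<c$ eventually. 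Either patch is two lines; without one of them the last step does not go through for $d=2$.
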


\begin{proof}
Write $\lambda_{k}=\lambda(\mathcal{Q}(S_{d}^{k}))$ for short. Let $x$ be the
principal eigenvector of $\mathcal{Q}(S_{d}^{k})$ corresponding to
$\lambda_{k}$. Let $a$ be the component of $x$ corresponding to the heart of
$S_{d}^{k}.$ By the symmetry of the pendent vertices in the same edge, we see
that they have the same component in $x.$ Furthermore, by the uniqueness of
$x$ (see (3) of Theorem \ref{Perron-Frobenius}), we see that all the pendent
vertices in $S_{d}^{k}$ have the same component in $x,$ say $b.$ Then
$\lambda_{k}$ satisfies the following equations%
\[
\left\{
\begin{array}
[c]{ll}%
\lambda_{k}a^{k-1} & =da^{k-1}+db^{k-1},\\
\lambda_{k}b^{k-1} & =b^{k-1}+ab^{k-2}.
\end{array}
\right.
\]
By eliminations of $a$ and $b,$ we obtain%

\[
(\lambda_{k}-d)(\lambda_{k}-1)^{k-1}-d=0.
\]
Set
\[
f_{k}(\lambda)=(\lambda-d)(\lambda-1)^{k-1}-d,
\]
then $\lambda_{k}$ is the largest real root of the equation $f_{k}(\lambda)=0.$

Particularly,
\[
(\lambda_{k+1}-d)(\lambda_{k+1}-1)^{k}=d.
\]
Since $f_{k}(d)=-d<0,$ and lim$_{\lambda\rightarrow+\infty}f_{k}%
(\lambda)=+\infty,$ we have $\lambda_{k}>d.$

So $\{\lambda_{k}\}$ is a strictly decreasing sequence and $\lambda_{k}>d$
when $d\geq2.$ Thus $\lim_{k\rightarrow\infty}\lambda_{k}$ exists. From
\[
(\lambda_{k}-d)(\lambda_{k}-1)^{k-1}=d,
\]
we have
\[
\lim_{k\rightarrow\infty}\lambda_{k}-d=\frac{d}{\lim_{k\rightarrow\infty
}(\lambda_{k}-1)^{k-1}}=0,
\]
thus $\lim_{k\rightarrow\infty}\lambda_{k}=d,$ i.e., $\lim_{k\rightarrow
\infty}\lambda(\mathcal{Q}(S_{d}^{k}))=d$ holds.
\end{proof}

To determine the value $\lim_{k\rightarrow\infty}\lambda(\mathcal{Q}(G^{k}))$
for a general graph $G$, we first cite a result just for a graph due to
K$\ddot{o}$ing.

\begin{lemma}
\label{for regular induced}\cite{Koing1963} Every graph $G$ of maximum degree
$\Delta$ is an induced subgraph of some $\Delta$-regular graph.
\end{lemma}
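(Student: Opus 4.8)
The plan is to prove this by induction on the quantity $M(G)=\max_{v\in V(G)}\big(\Delta-d_G(v)\big)$, the largest \emph{degree deficiency} of a vertex of $G$. If $M(G)=0$ then every vertex already has degree $\Delta$, so $G$ is itself $\Delta$-regular and the required regular graph is $G$; this is the base case. For the inductive step I would assume the statement for every graph of maximum degree $\Delta$ whose maximum deficiency is strictly smaller than $M(G)$.

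The heart of the argument is a \emph{doubling with a deficiency matching}. Given $G$ with $M(G)\geq 1$, I would take two vertex-disjoint copies $G_1$ and $G_2$ of $G$ and, for each vertex $v$ with $d_G(v)<\Delta$, join its copy in $G_1$ to its copy in $G_2$ by a single new edge; call the result $G^\ast$. I would then check three things. First, $G^\ast$ still has maximum degree $\Delta$: every deficient vertex gains exactly one incident edge, so its degree rises to at most $\Delta$, while each vertex of degree $\Delta$ is left untouched and at least one such vertex exists. Second, the maximum deficiency drops by exactly one, $M(G^\ast)=M(G)-1$, since each deficient vertex has its deficiency reduced by $1$ and no new deficiency is created. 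Third, $G\cong G_1$ is an \emph{induced} subgraph of $G^\ast$, because all the new edges run between the two copies and none lie inside $G_1$. Applying the induction hypothesis to $G^\ast$ yields a $\Delta$-regular graph $H$ containing $G^\ast$ as an induced subgraph, and since the induced-subgraph relation is transitive, $G$ is an induced subgraph of $H$, which closes the induction.

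The point that needs the most care is the third one: the lemma asserts an \emph{induced} embedding, not merely a subgraph embedding, so at every stage I must guarantee that no edge is ever inserted within a single copy of $G$ and that the copies acquire no spurious adjacencies. The doubling construction is arranged exactly so that all added edges are of the cross type, and transitivity of the induced-subgraph relation then carries the property up through the recursion. A secondary bookkeeping point is that the process terminates after precisely $M(G)$ steps, producing a finite $\Delta$-regular graph on $2^{M(G)}\,|V(G)|$ vertices, so there is no issue of non-termination.
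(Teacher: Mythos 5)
Your proof is correct. Note that the paper itself offers no proof of this lemma at all; it is simply cited to K\"onig's 1936 monograph, so there is no in-paper argument to compare against. What you have written is the standard doubling construction for this classical fact, and all three verification points are sound: the new cross edges raise each deficient vertex's degree by exactly one (and cannot create multiple edges, since the two copies are vertex-disjoint and each deficient $v$ contributes a single edge between its two copies), a vertex of degree $\Delta$ survives untouched so the maximum degree stays exactly $\Delta$ (which is needed for the induction hypothesis to apply to $G^\ast$), and the induced-subgraph property propagates by transitivity because no edge is ever added inside a copy. The induction on the maximum deficiency $M(G)$ terminates, giving a $\Delta$-regular host on $2^{M(G)}\,|V(G)|$ vertices. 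So your contribution is a self-contained proof of a black-boxed citation, which is a strictly stronger deliverable than what the paper provides.
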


\begin{theorem}
\label{main 2} Let $G$ be an ordinary graph with maximum degree $\Delta\geq2.$
Then we have $\lim_{k\rightarrow\infty}\lambda(\mathcal{Q}(G^{k}))=\Delta.$
\end{theorem}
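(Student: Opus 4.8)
The plan is to sandwich $\lambda(\mathcal{Q}(G^{k}))$ between two quantities, each of which tends to $\Delta$ as $k\to\infty$, and then apply the squeeze theorem. The lower bound will come from the sunflower sitting at a vertex of maximum degree, and the upper bound from embedding $G$ into a $\Delta$-regular graph via Lemma \ref{for regular induced}.

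The essential ingredient is a monotonicity principle for the signless Laplacian: if $H_{1}$ is a sub-hypergraph of $H_{2}$ (viewed on a common vertex set after padding $H_{1}$ with isolated vertices), then $\mathcal{Q}(H_{1})\le\mathcal{Q}(H_{2})$ entrywise as nonnegative tensors, since both the diagonal (degree) part and the off-diagonal (adjacency) part can only grow when edges are added. By the standard fact that the spectral radius of a nonnegative tensor is monotone under entrywise domination, this gives $\lambda(\mathcal{Q}(H_{1}))=\rho(\mathcal{Q}(H_{1}))\le\rho(\mathcal{Q}(H_{2}))=\lambda(\mathcal{Q}(H_{2}))$. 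For the lower bound, let $v$ be a vertex of $G$ with $d_{G}(v)=\Delta$; the star $S_{\Delta}$ consisting of $v$ and its $\Delta$ incident edges is a subgraph of $G$, so forming $k$-th powers (with the inserted vertices chosen compatibly) realizes the sunflower $S_{\Delta}^{k}$ as a sub-hypergraph of $G^{k}$. Monotonicity then yields $\lambda(\mathcal{Q}(S_{\Delta}^{k}))\le\lambda(\mathcal{Q}(G^{k}))$, and by Lemma \ref{lemma for sunflower} the left-hand side tends to $\Delta$ (indeed it stays strictly above $\Delta$).

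For the upper bound, Lemma \ref{for regular induced} supplies a $\Delta$-regular graph $\tilde{G}$ containing $G$ as an induced subgraph. Choosing the inserted vertices of $G^{k}$ to coincide with the corresponding ones of $\tilde{G}^{k}$ exhibits $G^{k}$ as a sub-hypergraph of $\tilde{G}^{k}$; moreover every original vertex satisfies $d_{G}(v)\le\Delta=d_{\tilde{G}}(v)$, so the inclusion respects the degree (diagonal) terms as well. Monotonicity gives $\lambda(\mathcal{Q}(G^{k}))\le\lambda(\mathcal{Q}(\tilde{G}^{k}))$, and since $\tilde{G}$ is $\Delta$-regular with $\Delta\ge2$, Lemma \ref{for regular lim} yields $\lambda(\mathcal{Q}(\tilde{G}^{k}))\to\Delta$. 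Combining the two bounds,
\[
\lambda(\mathcal{Q}(S_{\Delta}^{k}))\le\lambda(\mathcal{Q}(G^{k}))\le\lambda(\mathcal{Q}(\tilde{G}^{k})),
\]
where both outer sequences converge to $\Delta$, forcing $\lim_{k\to\infty}\lambda(\mathcal{Q}(G^{k}))=\Delta$. (Equivalently, Theorem \ref{main 1 Q} already shows the sequence is decreasing, while the lower bound keeps it above $\Delta$, so the limit exists and the upper bound pins it to $\Delta$.)

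The step I expect to need the most care is the monotonicity input together with the compatible choice of inserted vertices: one must check that when $G^{k}$ is placed inside $\tilde{G}^{k}$ (and $S_{\Delta}^{k}$ inside $G^{k}$) the new degree-one vertices are identified consistently so the edge sets genuinely nest, and that the degree-tensor comparison holds on the original vertices. Once the nesting and the entrywise domination are in place, the conclusion follows by routine application of the cited limit lemmas and the squeeze theorem.
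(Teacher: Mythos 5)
Your proposal is correct and follows essentially the same route as the paper: a squeeze between $\lambda(\mathcal{Q}(S_{\Delta}^{k}))$ from below (Lemma \ref{lemma for sunflower}) and $\lambda(\mathcal{Q}(F^{k}))$ for a $\Delta$-regular supergraph $F$ from above (Lemmas \ref{for regular induced} and \ref{for regular lim}), using monotonicity of $\lambda(\mathcal{Q}(\cdot))$ under sub-hypergraph inclusion. The only cosmetic difference is that you sketch the monotonicity via entrywise domination of nonnegative tensors, whereas the paper simply cites it as Proposition 4.5 of \cite{HuQiXie2015}.
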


\begin{proof}
We have proved that $\{\lambda(\mathcal{Q}(G^{k}))\}$ is a strictly decreasing
sequence by Theorem \ref{main 1 Q}. Obviously, $\lambda(\mathcal{Q}(G^{k}))>0$
for any $k\geq2.$ Thus $\lim_{k\rightarrow\infty}\lambda(\mathcal{Q}(G^{k}))$ exists.

It is known that if $F^{\prime}$ is a sub-hypergraph of $F,$ then
$\lambda(\mathcal{Q}(F^{\prime}))\leq\lambda(\mathcal{Q}(F))$ (see Proposition
4.5 in \cite{HuQiXie2015}).

Since $G$ has maximum degree $\Delta,$ $G$ contains star $S_{\Delta}$ as a
sub-graph, and then $G^{k}$ contains the sunflower $S_{\Delta}^{k}$ as a
sub-hypergraph. Thus $\lambda(\mathcal{Q}(G^{k}))\geq\lambda(\mathcal{Q}%
(S_{\Delta}^{k})).$ Furthermore Lemma \ref{lemma for sunflower} implies that
\[
\lim_{k\rightarrow\infty}\lambda(\mathcal{Q}(G^{k}))\geq\lim_{k\rightarrow
\infty}\lambda(\mathcal{Q}(S_{\Delta}^{k}))=\Delta.
\]
\ On the other hand, by Lemma \ref{for regular induced}, $G$ is a subgraph of
some $\Delta$-regular graph $F.$ Then $G^{k}$ is a sub-hypergraph of $F^{k}$.
Thus $\lambda(\mathcal{Q}(G^{k}))\leq\lambda(\mathcal{Q}(F^{k})).$ Furthermore
by Lemma \ref{for regular lim} we have
\[
\lim_{k\rightarrow\infty}\lambda(\mathcal{Q}(G^{k}))\leq\lim_{k\rightarrow
\infty}\lambda(\mathcal{Q}(F^{k}))=\Delta.
\]
So we obtain
\[
\lim_{k\rightarrow\infty}\lambda(\mathcal{Q}(G^{k}))=\Delta.
\]

\end{proof}

\section{Largest H-eigenvalue of adjacency tensor of $G^{k,s}$}

In \cite{ZhouSunWangBu}, it was proved that $\lambda(\mathcal{A}%
(G^{k}))=\lambda(A(G))^{\frac{2}{k}}$; in \cite{KhanFan2015} it was proved
that $\lambda(\mathcal{A}(G^{k,k/2}))=\lambda(A(G))$. By using the technique
provided in \cite{ZhouSunWangBu}, we will prove a general case.

\begin{theorem}
\label{main 3}If $\mu\neq0$ is an eigenvalue of the adjacency matrix $A(G)$ of
graph $G,$ then $\mu^{\frac{2s}{k}}$ is an eigenvalue of the adjacency tensor
$\mathcal{A}(G^{k,s}).$ Moreover $\lambda(\mathcal{A}(G^{k,s}))=\lambda
(A(G))^{\frac{2s}{k}.}$
\end{theorem}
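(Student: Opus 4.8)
The plan is to build an explicit eigenvector of $\mathcal{A}(G^{k,s})$ out of one for $A(G)$, exploiting the symmetry of the generalized power. Fix a nonzero eigenvalue $\mu$ of $A(G)$ with eigenvector $z=(z_v)_{v\in V}$, so $\sum_{u\sim v}z_u=\mu z_v$ for all $v$. The vertices of $G^{k,s}$ split into the sets $V_v$ ($v\in V$) and $V_e$ ($e\in E$), and since the vertices inside any single $V_v$ (resp. $V_e$) play interchangeable roles, I would search for a tensor eigenvector $x$ that is constant on each such set: put $x_p=a_v$ for $p\in V_v$ and $x_q=b_e$ for $q\in V_e$. Writing $\mu'$ for the target eigenvalue, the ansatz I would try is
\[
a_v=z_v^{2/k},\qquad b_e=\Big(\frac{z_uz_v}{\mu}\Big)^{1/k}\ (e=\{u,v\}),\qquad \mu'=\mu^{2s/k},
\]
with a fixed consistent choice of $k$-th roots (so that $\mu\neq0$ is used exactly to make $b_e$ well defined).

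Next I would check the eigenequation $(\mathcal{A}(G^{k,s})x)_w=\mu' x_w^{k-1}$ at the two vertex types, using that $(\mathcal{A}(G^{k,s})x)_w$ is the sum over edges $e'\ni w$ of $\prod_{u\in e'\setminus\{w\}}x_u$. A vertex $q\in V_e$ with $e=\{u,v\}$ lies in the single hyperedge $V_u\cup V_v\cup V_e$, contributing $a_u^s a_v^s b_e^{k-2s-1}$; the equation at $q$ then reduces to $a_u^s a_v^s=\mu' b_e^{2s}$, which is exactly how $b_e$ was chosen. A vertex $p\in V_v$ lies in one hyperedge $V_u\cup V_v\cup V_e$ for each neighbour $u$ of $v$, contributing $a_v^{s-1}a_u^s b_e^{k-2s}$; substituting the value of $b_e$ and simplifying the exponents (the powers of $z_u$ collect to the first power, using $s+\tfrac{k-2s}{2}=\tfrac{k}{2}$), the equation at $p$ collapses to $\sum_{u\sim v}z_u=(\mu')^{k/(2s)}z_v$. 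Since this is the original matrix eigenequation, it holds precisely when $(\mu')^{k/(2s)}=\mu$, i.e. $\mu'=\mu^{2s/k}$, establishing the first assertion.

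For the final equality I would take $\mu=\lambda(A(G))=\rho(A(G))$ with $z>0$ its Perron eigenvector; as $\mu>0$, the construction yields a strictly positive $x$ with eigenvalue $\lambda(A(G))^{2s/k}$. If $G$ is connected then $G^{k,s}$ is connected, hence $\mathcal{A}(G^{k,s})$ is weakly irreducible, and by part~(2) of Theorem~\ref{Perron-Frobenius} an eigenvalue with a positive eigenvector must be the spectral radius; thus $\lambda(\mathcal{A}(G^{k,s}))=\rho(\mathcal{A}(G^{k,s}))=\lambda(A(G))^{2s/k}$. The disconnected case then follows by applying this componentwise, invoking Lemma~\ref{non-connected} and the monotonicity of $t\mapsto t^{2s/k}$. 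I expect the main difficulty to be not the core identity but the bookkeeping: verifying the exponent cancellations cleanly, and justifying well-definedness of the fractional powers (branch choice, signs, possibly complex values) in the general eigenvalue claim, whereas the $\lambda$-equality genuinely relies on positivity and weak irreducibility so that Perron--Frobenius certifies the constructed value as the \emph{largest} H-eigenvalue.
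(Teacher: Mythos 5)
Your proposal is correct and follows essentially the same route as the paper: the identical ansatz $y_w=x_v^{2/k}$ on $V_v$ and $y_w=(\mu^{-1}x_ux_v)^{1/k}$ on $V_e$, the same two-case verification of the eigenequation, and the same appeal to part (2) of Theorem \ref{Perron-Frobenius} plus Lemma \ref{non-connected} for the equality $\lambda(\mathcal{A}(G^{k,s}))=\lambda(A(G))^{2s/k}$. The only differences are cosmetic (you treat $\mu'$ as an unknown to be solved for, and you flag the branch-choice issue for fractional powers, which the paper leaves implicit).
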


\begin{proof}
Suppose that $x$ is an eigenvector of the eigenvalue $\mu\neq0$ of $A(G)$. As
shown in Definition \ref{generalized power hypergraph}, for any edge
$e=\{u,v\}$ denote by $V_{u}\cup V_{v}\cup V_{e}$ the corresponding edge of
$G^{k,s}$.

Now we construct a new vector $y$ (of dimension $|V(G^{k,s})|$) from $x$ by
adding components. Set%
\[
y_{w}=\left\{
\begin{array}
[c]{ll}%
(x_{v})^{\frac{2}{k}} & \text{if }w\in V_{v}\text{ for some }v,\\
(\mu^{-1}x_{u}x_{v})^{\frac{1}{k}} & \text{if }w\in V_{e}\text{ for some edge
}e=\{u,v\}.
\end{array}
\right.
\]
Now we will show $\mathcal{A}(G^{k,s})y=$ $\mu^{\frac{2s}{k}}y^{[k-1]}$ holding.

For any $w\in V_{v}$ for some $v,$ by the formula
\[
\sum_{\{u,v\}\in E(G)}x_{u}=\mu x_{v},
\]
we have
\begin{align}
(\mathcal{A}(G^{k,s})y)_{w}  &  =\sum_{\{u,v\}\in E(G)}(x_{v})^{\frac
{2(s-1)}{k}}(x_{u})^{\frac{2s}{k}}(\mu^{-1}x_{u}x_{v})^{\frac{k-2s}{k}%
}\nonumber\\
&  =\mu^{\frac{2s}{k}-1}(x_{v})^{\frac{k-2}{k}}\sum_{\{u,v\}\in E(G)}x_{u}\\
&  =\mu^{\frac{2s}{k}}(x_{v})^{\frac{2(k-1)}{k}}\nonumber\\
&  =\mu^{\frac{2s}{k}}y_{w}^{k-1}.\nonumber
\end{align}

For $w\in V_{e}$ for any edge $e=\{u,v\},$ we have
\begin{align}
(\mathcal{A}(G^{k,s})y)_{w}  &  =(x_{u})^{\frac{2s}{k}}(x_{v})^{\frac{2s}{k}%
}(\mu^{-1}x_{u}x_{v})^{\frac{k-2s-1}{k}}\nonumber\\
&  =\mu^{\frac{2s}{k}}(\mu^{-1}x_{u}x_{v})^{\frac{k-1}{k}}\nonumber\\
&  =\mu^{\frac{2s}{k}}y_{w}^{k-1}.\nonumber
\end{align}

Hence $\mu^{\frac{2s}{k}}$ is an eigenvalue of $\mathcal{A}(G^{k,s})$ with
eigenvector $y.$

If $G$ is connected and $\mu=\lambda(A(G)),$ then we may choose $x$ as a
positive eigenvector of $\lambda(A(G))$ by Perron-Frobenius theorem for
irreducible nonnegative matrix. In this case $y$ is a positive eigenvector of
the eigenvalue $\lambda(A(G))^{\frac{2s}{k}}$ of tensor $\mathcal{A}%
(G^{k,s}).$ In virtue of (2) of Theorem \ref{Perron-Frobenius} (or see Lemma
15 of \cite{ZhouSunWangBu}), we have
\[
\lambda(\mathcal{A}(G^{k,s}))=\lambda(A(G))^{\frac{2s}{k}.}%
\]

If $G=G_{1}\cup G_{2}\cup\cdot\cdot\cdot\cup G_{t},$ where $G_{i}$ is a
connected component of $G$ for $1\leq i\leq t$ and $t\geq2,$ then by Lemma
\ref{non-connected}
\[
\lambda(\mathcal{A}(G^{k,s}))=\max_{1\leq i\leq t}\{\lambda(\mathcal{A}%
(G_{i}^{k,s})\}=\max_{1\leq i\leq t}\{\lambda(\mathcal{A}(G_{i})^{\frac{2s}%
{k}}\}=\lambda(A(G))^{\frac{2s}{k}}.
\]

The proof is completed.
\end{proof}

Take $s=1,$ or $s=k/2$ for even $k,$ and noting that $G^{k}=G^{k,1},$ we have
Corollary \ref{Bu} and Corollary \ref{Fan}.

\begin{corollary}
\label{Bu}\cite{ZhouSunWangBu}If $\mu\neq0$ is an eigenvalue of the adjacency
matrix $A(G)$ of graph $G,$ then $\mu^{\frac{2}{k}}$ is an eigenvalue of the
adjacency tensor $\mathcal{A}(G^{k})$ of the hypergraph $G^{k}.$ Moreover
$\lambda(\mathcal{A}(G^{k}))=\lambda(A(G))^{\frac{2}{k}.}$
\end{corollary}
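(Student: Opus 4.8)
The plan is to obtain this statement as a direct specialization of Theorem \ref{main 3}. Recall from Definition \ref{generalized power hypergraph} that when $s=1$ the generalized power hypergraph $G^{k,s}$ coincides with the $k$th power hypergraph $G^{k}$, i.e. $G^{k,1}=G^{k}$. First I would set $s=1$ throughout the conclusion of Theorem \ref{main 3}: the exponent $\frac{2s}{k}$ collapses to $\frac{2}{k}$, so the theorem asserts that $\mu^{2/k}$ is an eigenvalue of $\mathcal{A}(G^{k,1})=\mathcal{A}(G^{k})$ whenever $\mu\neq 0$ is an eigenvalue of $A(G)$, and that $\lambda(\mathcal{A}(G^{k}))=\lambda(A(G))^{2/k}$. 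This is verbatim the claim of the corollary.

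For completeness I would note how the eigenvector construction from the proof of Theorem \ref{main 3} degenerates when $s=1$. In that case each vertex set $V_{v}$ is a singleton, so the original vertices of $G$ survive unchanged, while each edge $e=\{u,v\}$ acquires $k-2$ new vertices forming $V_{e}$; this recovers exactly the edge and vertex sets of $G^{k}$ from Definition \ref{power graph}. The vector $y$ built from the matrix eigenvector $x$ then assigns $y_{w}=x_{v}^{2/k}$ to the copy of $v$ and $y_{w}=(\mu^{-1}x_{u}x_{v})^{1/k}$ to the vertices of $V_{e}$, which is precisely the eigenvector used by Zhou et al. in \cite{ZhouSunWangBu}.

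There is no genuine obstacle here: the corollary is an immediate instance of Theorem \ref{main 3}, and the only points to check are the bookkeeping identity $G^{k,1}=G^{k}$ together with the trivial arithmetic $\frac{2s}{k}\big|_{s=1}=\frac{2}{k}$. The Perron--Frobenius input, namely part (2) of Theorem \ref{Perron-Frobenius}, that upgrades the eigenvalue statement to the spectral-radius equality in the connected case (and Lemma \ref{non-connected} for the general case) is inherited unchanged from the theorem, so nothing new needs to be established.
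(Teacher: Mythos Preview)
Your proposal is correct and follows exactly the paper's approach: the paper derives Corollary~\ref{Bu} simply by setting $s=1$ in Theorem~\ref{main 3} and noting $G^{k,1}=G^{k}$. Your additional remarks on how the eigenvector construction specializes are accurate but go beyond what the paper itself spells out.
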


\begin{corollary}
\label{Fan}\cite{KhanFan2015}Let $G$ be a connected ordinary graph, and let
$x>0$ be vector defined on $V(G)$. Let $x>0$ be a vector defined on
$V(G^{k,k/2})$ such that $x_{u}=x_{v}^{\frac{2}{k}}$ for each vertex $u\in
V_{v}$. Then $x$ is an eigenvector of $A(G)$ corresponding to $\lambda(A(G))$
if and only if $x$ is an eigenvector of $\mathcal{A}(G^{k,k/2})$ corresponding
to $\lambda(\mathcal{A}(G^{k,k/2}))$. Hence $\lambda(\mathcal{A}%
(G^{k,k/2}))=\lambda(A(G)).$
\end{corollary}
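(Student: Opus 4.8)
The plan is to deduce this corollary as the $s=k/2$ specialization of Theorem \ref{main 3}. The key structural observation is that when $s=k/2$ one has $k-2s=0$, so in Definition \ref{generalized power hypergraph} each edge-vertex set $V_e$ is empty; hence $V(G^{k,k/2})=\bigcup_{v\in V}V_v$ and every edge of $G^{k,k/2}$ has the form $V_u\cup V_v$ with $e=\{u,v\}\in E$. Consequently the vector $y$ constructed in the proof of Theorem \ref{main 3} loses its $V_e$ component and reduces exactly to $y_w=x_v^{2/k}$ for $w\in V_v$, which is precisely the lift described in the statement. So the correspondence asked for is really the correspondence already supplied by Theorem \ref{main 3}, read in the degenerate case $k-2s=0$.

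For the forward direction I would simply invoke Theorem \ref{main 3} with $\mu=\lambda(A(G))$ and $s=k/2$: if $x>0$ is an eigenvector of $A(G)$ for $\lambda(A(G))$, then its lift $y$ is an eigenvector of $\mathcal{A}(G^{k,k/2})$ for $\mu^{2s/k}=\lambda(A(G))$, and the ``moreover'' clause of Theorem \ref{main 3} gives $\lambda(\mathcal{A}(G^{k,k/2}))=\lambda(A(G))^{2s/k}=\lambda(A(G))$, so $y$ indeed corresponds to $\lambda(\mathcal{A}(G^{k,k/2}))$. Connectedness of $G$ (hence of $G^{k,k/2}$) lets me invoke Perron--Frobenius in both its matrix and its tensor form (Theorem \ref{Perron-Frobenius}), which is what makes ``positive eigenvector'' synonymous with ``eigenvector for the spectral radius'' on each side.

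The reverse direction is where a short computation is needed. I would write the eigenequation $(\mathcal{A}(G^{k,k/2})y)_w=\lambda y_w^{k-1}$ at a vertex $w\in V_v$, using $k-2s=0$ so that the factor $(\mu^{-1}x_ux_v)^{(k-2s)/k}$ disappears. Since $2s/k=1$ and $2(s-1)/k=(k-2)/k$, this reads $(x_v)^{(k-2)/k}\sum_{\{u,v\}\in E}x_u=\lambda(x_v^{2/k})^{k-1}$; because every $x_v>0$ I may divide by $(x_v)^{(k-2)/k}$, and the exponents collapse via $2(k-1)/k-(k-2)/k=1$ to yield $\sum_{\{u,v\}\in E}x_u=\lambda x_v$, i.e. $(A(G)x)_v=\lambda x_v$. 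Setting $\lambda=\lambda(\mathcal{A}(G^{k,k/2}))=\lambda(A(G))$ (the equality again from the ``moreover'' clause) shows $x$ is an eigenvector of $A(G)$ for $\lambda(A(G))$. The concluding equality $\lambda(\mathcal{A}(G^{k,k/2}))=\lambda(A(G))$ is then literally the $s=k/2$ instance of that same clause.

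The main point requiring care is the reverse implication: one must check that the single eigenequation at $w\in V_v$ faithfully encodes the matrix eigenequation at $v$, i.e. that the power-$2/k$ bookkeeping cancels exactly and that positivity of $x$ legitimizes the division. This is the only computation, and it is essentially the computation in the proof of Theorem \ref{main 3} run backwards; there is no genuine obstacle, since the entire content is contained in Theorem \ref{main 3} and the corollary is a clean restatement of its $s=k/2$ instance in eigenvector language.
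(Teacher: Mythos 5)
Your proposal is correct and takes essentially the same route as the paper, which justifies the corollary simply by taking $s=k/2$ in Theorem \ref{main 3} (so that $V_e=\emptyset$ and the lift degenerates to $y_w=x_v^{2/k}$). Your explicit backward computation of the eigenequation at $w\in V_v$, yielding $\sum_{\{u,v\}\in E}x_u=\lambda x_v$ after dividing by $x_v^{(k-2)/k}>0$, correctly fills in the ``only if'' direction that the paper leaves implicit (one could equally get it from the uniqueness of the positive eigenvector in part (3) of Theorem \ref{Perron-Frobenius}, since $G^{k,k/2}$ inherits connectedness from $G$).
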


\begin{corollary}
Let $G$ be an ordinary graph with maximum $\Delta\geq2,s\geq1$ be a fixed
integer. Then $\lambda(\mathcal{A}(G^{k+1,s}))<\lambda(\mathcal{A}(G^{k,s})),$
and $\lim_{k\rightarrow\infty}\lambda(\mathcal{A}(G^{k,s}))=1.$
\end{corollary}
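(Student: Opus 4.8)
The plan is to reduce everything to Theorem \ref{main 3}, which supplies the exact value $\lambda(\mathcal{A}(G^{k,s}))=\lambda(A(G))^{2s/k}$, and then to treat this as a one-variable expression in $k$ with $s$ fixed. Writing $\rho=\lambda(A(G))$ (the adjacency spectral radius of $G$) for short, both assertions become elementary statements about the sequence $\rho^{2s/k}$: a strict-decrease claim and a limit claim.

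The one genuine input is the strict bound $\rho>1$, which I would extract from the hypothesis $\Delta\geq2$. Since $G$ has a vertex of degree $\Delta\geq2$, it contains the star $S_{\Delta}$ as a subgraph, and the adjacency spectral radius of $S_{\Delta}$ equals $\sqrt{\Delta}$. By monotonicity of the spectral radius under taking subgraphs (entrywise $A(S_{\Delta})\leq A(G)$ for nonnegative matrices), this yields $\rho\geq\sqrt{\Delta}\geq\sqrt{2}>1$.

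Granted $\rho>1$, the two claims follow from monotonicity and continuity of $t\mapsto\rho^{t}$. For the strict decrease, the exponent satisfies $2s/(k+1)<2s/k$ for every $k$, and since the base $\rho>1$ makes $t\mapsto\rho^{t}$ strictly increasing, we get $\lambda(\mathcal{A}(G^{k+1,s}))=\rho^{2s/(k+1)}<\rho^{2s/k}=\lambda(\mathcal{A}(G^{k,s}))$. For the limit, $2s/k\to0$ as $k\to\infty$, so by continuity $\lambda(\mathcal{A}(G^{k,s}))=\rho^{2s/k}\to\rho^{0}=1$.

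I do not expect any real obstacle here: once Theorem \ref{main 3} is invoked, the entire statement collapses to the single observation $\rho>1$, so the only step requiring any argument is the subgraph/star bound above. It is worth recording that the hypothesis $\Delta\geq2$ is used precisely at this point; if $\Delta=1$ then $G$ is a matching, $\rho=1$, and the sequence $\rho^{2s/k}\equiv1$ is constant rather than strictly decreasing, which explains the exclusion of that degenerate case.
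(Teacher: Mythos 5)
Your proposal is correct and is essentially the argument the paper intends (the corollary is stated without proof as an immediate consequence of Theorem \ref{main 3}): reduce to $\lambda(\mathcal{A}(G^{k,s}))=\lambda(A(G))^{2s/k}$ and observe that $\Delta\geq2$ forces $\lambda(A(G))\geq\sqrt{\Delta}>1$ via the star subgraph, after which both claims are elementary properties of $t\mapsto\rho^{t}$. The only cosmetic point worth adding is that the definition of $G^{k,s}$ requires $k\geq 2s$, so the monotonicity statement should be read for $k$ in that range.
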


\end{document}